\documentclass{article}
\usepackage{amsfonts, amstext, amsmath, amsthm, amscd, amssymb, hyperref, float, longtable, enumitem, fullpage}
\usepackage{epsfig, graphics, psfrag}
\usepackage{graphicx}
\usepackage{color}
\usepackage[font=small, labelfont=bf]{caption}
\usepackage{float}

\newtheorem{definition}{Definition}[subsection]
\let\olddefinition\definition
\renewcommand{\definition}{\olddefinition\normalfont}
\newtheorem{remark}{Remark}[subsection]
\let\oldremark\remark
\renewcommand{\remark}{\oldremark\normalfont}
\newtheorem{proposition}{Proposition}[subsection]
\newtheorem{theorem}{Theorem}[subsection]
\newtheorem{lemma}{Lemma}[subsection]
\newtheorem{corollary}{Corollary}[subsection]

\begin{document}
\title{Combinatorics of Link Diagrams and Volume}
\author{Adam Giambrone\thanks{Research supported in part by RTG grant DMS-0739208 and NSF grant DMS-1105843.}\\ Alma College\\ giambroneaj@alma.edu}
\date{}
\maketitle
\begin{abstract}
We show that the volumes of certain hyperbolic A-adequate links can be bounded (above and) below in terms of two diagrammatic quantities: the twist number and the number of certain alternating tangles in an A-adequate diagram. We then restrict our attention to plat closures of certain braids, a rich family of links whose volumes can be bounded in terms of the twist number alone. Furthermore, in the absence of special tangles, our volume bounds can be expressed in terms of a single stable coefficient of the colored Jones polynomial. Consequently, we are able to provide a new collection of links that satisfy a Coarse Volume Conjecture.
\end{abstract}

\section{Introduction}

One of the current aims of knot theory is to strengthen the relationships among the hyperbolic volume of the link complement, the colored Jones polynomials, and data extracted from link diagrams. Recently, Futer, Kalfagianni, and Purcell (\cite{Survey}, \cite{Guts}) showed that, for sufficiently twisted negative braid closures and for certain Montesinos links, the volume of the link complement can be bounded above and below in terms of the twist number of an A-adequate link diagram. Similar results for alternating links were found in \cite{Lackenby} and improved upon in the appendix of \cite{Lackenby} and in \cite{AgolStorm}. The volume of many families of link complements has also been expressed in terms of coefficients of the colored Jones polynomial (\cite{Volumish}, \cite{Guts}, \cite{Filling}, \cite{Symmetric}, \cite{Cusp}, \cite{Stoimenow}).

In this paper, we begin with a study of the structure of A-adequate link diagrams whose all-A states satisfy a certain two-edge loop condition. We use this study to express a lower bound on the volume of the link complement in terms of two diagrammatic quantities: the twist number and the number of certain alternating tangles (called \emph{special tangles}) in the A-adequate diagram. This result complements the work of Agol and D. Thurston (\cite{Lackenby}, Appendix), in which the volume is bounded above in terms of the twist number alone. It should also be noted that the recent work of Futer, Kalfagianni, and Purcell in \cite{New} shows that the links considered in this paper must be hyperbolic. Let $t(D)$ denote the twist number of $D(K)$ and let $st(D)$ denote the number of special tangles in $D(K)$. The main result of this paper is stated below: 

\begin{theorem}[Main Theorem] Let $D(K)$ be a connected, prime, A-adequate link diagram that satisfies the two-edge loop condition and contains $t(D)\geq2$ twist regions. Then $K$ is hyperbolic and the complement of $K$ satisfies the following volume bounds:

\begin{equation}
\frac{v_{8}}{3}\cdot\left[t(D)-st(D)\right] \leq \mathrm{vol}(S^{3}\backslash K) < 10v_{3}\cdot(t(D)-1),
\end{equation}

\noindent where $t(D)\geq st(D)$. If $t(D)=st(D)$, then $D(K)$ is alternating and the lower bound of $\displaystyle \frac{v_{8}}{2}\cdot (t(D)-2)$ from Theorem $2.2$ of \cite{AgolStorm} may be used. Recall that $v_{8}=3.6638\ldots$ and $v_{3}=1.0149\ldots$ denote the volumes of a regular ideal octahedron and tetrahedron, respectively. 

\label{mainthm} 
\end{theorem}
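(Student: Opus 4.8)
The two inequalities are of very different character, so the plan is to handle them separately. For the upper bound I would invoke the theorem of Agol and D.\ Thurston from the appendix to \cite{Lackenby}, which bounds the volume of a link complement above by $10v_3\cdot(t(D)-1)$ in terms of the twist number of a suitable diagram. Connectedness, primeness, and the standing hypothesis $t(D)\geq2$ are exactly what is needed for this estimate to apply to $D(K)$, and the strictness reflects that equality cannot occur for an honest hyperbolic link. Hyperbolicity itself is not something I would reprove: as noted in the introduction, the links satisfying our hypotheses are shown to be hyperbolic in \cite{New}, which simultaneously validates the hypotheses of both the Agol--Thurston estimate and the lower-bound machinery below.

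The lower bound carries all of the geometric content, and here the strategy is the guts program of Futer, Kalfagianni, and Purcell. First I would span $D(K)$ by its all-$A$ state surface $S_A$; A-adequacy guarantees that $S_A$ is incompressible and boundary-incompressible in $S^3\setminus K$ (\cite{Guts}), so that cutting $S^3\setminus K$ along $S_A$ produces a manifold $M_A$ to which the Agol--Storm--Thurston inequality applies. The target is then the chain
\[
\mathrm{vol}(S^3\setminus K)\;\geq\;\frac{v_8}{2}\,\chi_-\big(\mathrm{guts}(M_A)\big)\;\geq\;\frac{v_8}{2}\cdot\frac{2}{3}\big[t(D)-st(D)\big]\;=\;\frac{v_8}{3}\big[t(D)-st(D)\big],
\]
in which the first inequality is \cite{AgolStorm} and the second is a purely diagrammatic estimate on the guts.

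To establish the middle inequality I would decompose $M_A$ into the ideal polyhedra of \cite{Guts} and identify $\mathrm{guts}(M_A)$ by excising the essential product disks from this decomposition; the governing formula expresses $\chi_-(\mathrm{guts}(M_A))$ as $\chi_-(\mathbb{G}_A')$ corrected by the number of complex essential product disks, where $\mathbb{G}_A'$ is the reduced all-$A$ state graph. The role of the two-edge loop condition, together with the structural results of the preceding sections, is precisely to control where essential product disks can arise: each such disk forces an alternating configuration which the analysis confines to a special tangle. Consequently every twist region \emph{not} absorbed into one of the $st(D)$ special tangles registers in the guts with a definite multiplicity, while the special tangles are exactly the loci at which the guts fails to detect the full twist number. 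Tallying these contributions against $\mathbb{G}_A'$ is what yields the factor $\tfrac{2}{3}$ and hence $\chi_-(\mathrm{guts}(M_A))\geq\tfrac{2}{3}[t(D)-st(D)]$.

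I expect the combinatorial accounting of the previous paragraph to be the main obstacle: one must show that, under the two-edge loop condition, \emph{every} essential product disk is trapped inside a special tangle, and that the surviving twist regions each contribute the asserted amount to $\chi_-(\mathrm{guts}(M_A))$. This demands a careful case analysis of how twist regions interact with the all-$A$ smoothing and with the normal squares of the polyhedral decomposition, and it is the need to absorb the exceptional, alternating-type configurations that forces the special tangles into the statement. Finally, in the degenerate case $t(D)=st(D)$ the same analysis forces $D(K)$ to be alternating, so that the main lower bound becomes vacuous; there I would simply substitute the sharper estimate $\tfrac{v_8}{2}(t(D)-2)$ of Theorem $2.2$ of \cite{AgolStorm}, completing the argument.
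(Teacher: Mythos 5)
Your skeleton matches the paper's at the top level --- hyperbolicity and the upper bound come from \cite{New} and the Agol--Thurston appendix to \cite{Lackenby}, and the lower bound reduces to a diagrammatic estimate --- but the heart of the proof is missing, and you have flagged it yourself as ``the main obstacle.'' The paper does \emph{not} reopen the guts machinery: the entire geometric content (state surface, polyhedral decomposition, essential product disks, the identification of $\chi_-\bigl(\mathrm{guts}(M_A)\bigr)$ with $-\chi(\mathbb{G}_A')$ under the TELC) is packaged once and for all into the cited Theorem~\ref{Cor}, which asserts $-v_8\cdot\chi(\mathbb{G}_A')\leq\mathrm{vol}(S^3\backslash K)<10v_3\cdot(t(D)-1)$. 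What the Main Theorem actually requires, and what your proposal does not supply, is the purely combinatorial inequality $-\chi(\mathbb{G}_A')\geq\frac{1}{3}\bigl[t(D)-st(D)\bigr]$. The paper gets this by an elementary degree count: Lemma~\ref{chilemma} gives $-\chi(\mathbb{G}_A')=t(D)-\#\{OCs\}$; Lemma~\ref{tanglemma} shows every ``other circle'' meets at least two twist regions and that those meeting exactly two are precisely the special circles; then the handshake lemma applied to $\mathbb{G}_A'$ (vertices partitioned into small inner circles of degree $2$, special circles of degree $2$, and remaining $OCs$ of degree $\geq3$) yields $2\,st(D)+\sum_{\text{remaining }OCs}\deg(v)=2t(D)$, hence $3\cdot\#\{OCs\}\leq 2t(D)+st(D)$ and the claimed bound, together with the side fact $t(D)\geq st(D)$. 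Your alternative route --- showing directly that every essential product disk is ``trapped inside a special tangle'' and that each surviving twist region contributes a definite amount to the guts --- is not carried out, would essentially amount to reproving \cite{New}, and mislocates the role of the special tangles: they enter the estimate not as containers for product disks but as the degree-two vertices that weaken the degree-sum bound on $\mathbb{G}_A'$.

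Two smaller points. First, your normalization is internally inconsistent with the literature: the bound used here is $\mathrm{vol}\geq v_8\,\chi_-(\mathrm{guts}(M_A))$ with $\chi_-(\mathrm{guts}(M_A))=-\chi(\mathbb{G}_A')\geq\frac{1}{3}[t(D)-st(D)]$, not $\frac{v_8}{2}$ against a guts estimate of $\frac{2}{3}[t(D)-st(D)]$; your constants happen to multiply out to the same $\frac{v_8}{3}$, but the intermediate inequality you propose to prove is off by a factor of two from what is true. Second, the degenerate case $t(D)=st(D)$ is not ``the same analysis'': the paper derives it from Eq.~(\ref{degree}), which forces every $OC$ to be a special circle, so that $H_A$ is a cycle alternating between special circles and twist-region resolutions; fusing the (alternating) special tangles around this cycle exhibits $D(K)$ as an alternating diagram, and only then does Theorem $2.2$ of \cite{AgolStorm} apply. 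That argument needs to be made, not asserted.
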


Note that the coefficients of $t(D)$ in the upper and lower bounds differ by a multiplicative factor of $8.3102\ldots$, a factor that we would like to reduce by studying specific families of links. Therefore, we will later restrict attention to A-adequate plat closures of certain braids (which we call \emph{strongly negative plat diagrams} and \emph{mixed-sign plat diagrams}). By studying the structure of these two families of link diagrams, we can provide volume bounds that are usually sharper than those given by the Main Theorem. 

Furthermore, we are able to translate the volume bounds of the Main Theorem so that they may be expressed in terms of $st(D)$ and a single stable coefficient, $\beta_{K}'$, of the colored Jones polynomial. In many cases, the volume of the strongly negative and mixed-sign plats can be bounded in terms of $\beta_{K}'$ alone. Results of this nature can be viewed as providing families of links that satisfy a Coarse Volume Conjecture (\cite{Guts}, Section 10.4).


\section{Preliminaries}
\label{sec}

Let $D(K) \subseteq S^2$ denote a diagram of a link $K \subseteq S^3$. To smooth a crossing of the link diagram $D(K)$, we may either \emph{A-resolve} or \emph{B-resolve} this crossing according to Fig.~\ref{resolutions}. By A-resolving each crossing of $D(K)$ we form the \emph{all-A state} of $D(K)$, which is denoted by $H_{A}$ and consists of a disjoint collection of \emph{all-A circles} and a disjoint collection of dotted line segments, called \emph{A-segments}, that are used record the locations of crossing resolutions. We will adopt the convention throughout this paper that any unlabeled segments are assumed to be A-segments. We call a link diagram $D(K)$ \emph{A-adequate} if $H_{A}$ does not contain any A-segments that join an all-A circle to itself, and we call a link $K$ \emph{A-adequate} if it has a diagram that is A-adequate. 

\begin{figure}
	\centering
		\def\svgwidth{2.5in}
		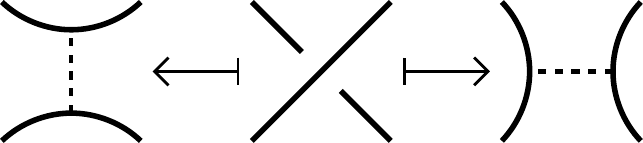
	\caption{A crossing neighborhood of a link diagram (middle), along with its A-resolution (right) and B-resolution (left).}
	\label{resolutions}
\end{figure}

\begin{remark}
While we will focus exclusively on A-adequate links, our results can easily be extended to semi-adequate links by reflecting the link diagram $D(K)$ and obtaining the corresponding results for B-adequate links. 
\end{remark}

From $H_{A}$ we may form the \emph{all-A graph}, denoted $\mathbb{G}_{A}$, by contracting the all-A circles to vertices and reinterpreting the A-segments as edges. From this graph we can form the \emph{reduced all-A graph}, denoted $\mathbb{G}_{A}'$, by replacing all multi-edges with a single edge. For an example of a diagram $D(K)$, its all-A resolution $H_{A}$, its all-A graph $\mathbb{G}_{A}$, and its reduced all-A graph $\mathbb{G}_{A}'$, see Fig.~\ref{figure8}. Let $v(G)$ and $e(G)$ denote the number of vertices and edges, respectively, in a graph $G$. Let $-\chi(G)=e(G)-v(G)$ denote the negative Euler characteristic of $G$. 

\begin{figure}
	\centering
		\def\svgwidth{3.5in}
		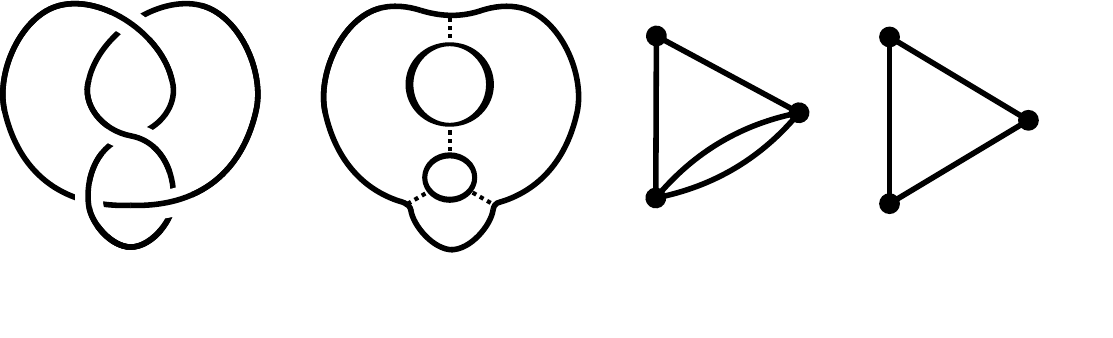
	\caption{A link diagram $D(K)$, its all-A resolution $H_{A}$, its all-A graph $\mathbb{G}_{A}$, and its reduced all-A graph $\mathbb{G}_{A}'$.}
	\label{figure8}
\end{figure}

\begin{remark}
\label{circleremark}
Note that $v(\mathbb{G}_{A}')$ is the same as the number of all-A circles in $H_{A}$ and that $e(\mathbb{G}_{A})$ is the same as the number of A-segments in $H_{A}$. From a graphical perspective, A-adequacy of $D(K)$ can equivalently be defined by the condition that $\mathbb{G}_{A}$ contains no one-edge loops that connect a vertex to itself.   
\end{remark}

\begin{figure}
	\centering
		\def\svgwidth{200pt}
		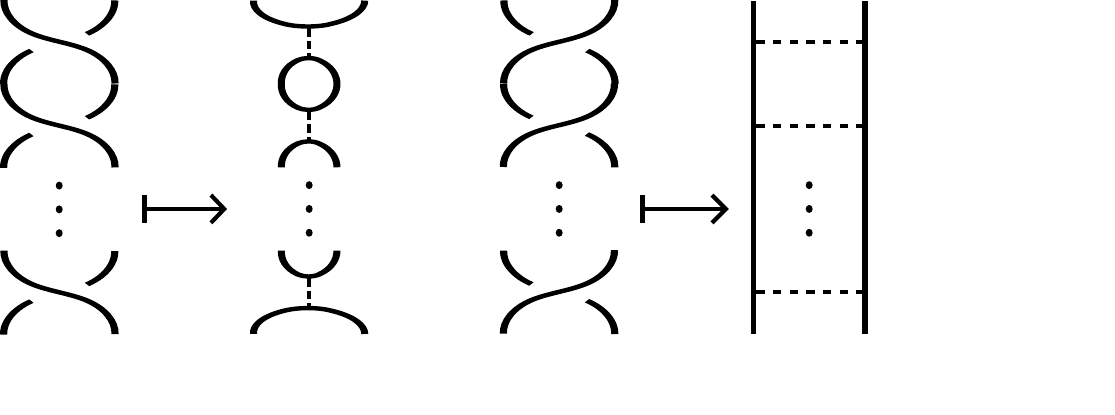
	\caption{Long and short resolutions of a twist region of $D(K)$.}
	\label{longshort}
\end{figure}

\begin{definition} 
Define a \emph{twist region} of $D(K)$ to be a longest possible string of bigons in the projection graph of $D(K)$. Denote the number of twist regions in $D(K)$ by $t(D)$ and call $t(D)$ the \emph{twist number} of $D(K)$. Note that it is possible for a twist region to consist of a single crossing of $D(K)$. 
\end{definition} 

\begin{definition} 
If a given twist region contains two or more crossings, then the A-resolution of a left-handed twist region will be called a \emph{long resolution} and the A-resolution of a right-handed twist region will be called a \emph{short resolution}. See Fig.~\ref{longshort} for depictions of these resolutions. We will call a twist region \emph{long} if its A-resolution is long and \emph{short} if its A-resolution is short. 
\end{definition}

\begin{definition}
A link diagram $D(K)$ satisfies the \emph{two-edge loop condition (TELC)} if, whenever two all-A circles share a pair of A-segments, these segments correspond to crossings from the same short twist region of $D(K)$. 
\end{definition}

\begin{definition}
Call an alternating tangle in $D(K)$ a \emph{special tangle} if, up to planar isotopy, it consists of exactly one of the following:
\begin{itemize}


\item[(1)] a tangle sum of a vertical short twist region and a one-crossing twist region (with the crossing type of Fig.~\ref{resolutions})
\item[(2)] a tangle sum of two vertical short twist regions
\item[(3)] a tangle sum of a horizontal long twist region and a vertical short twist region 
\end{itemize}
\label{spectangdef}


\noindent To look for such tangles in $D(K) \subseteq S^{2}$, we look for simple closed curves in the plane that intersect $D(K)$ exactly four times and that contain a special tangle on one side of the curve. Equivalently, the special tangles of $D(K)$ can be found in the all-A state $H_{A}$ by looking for all-A circles that are incident to A-segments from a pair of twist regions from the tangle sums mentioned above. We call these all-A circles \emph{special circles (SCs)} of $H_{A}$. See Fig.~\ref{specialtangles} for depictions of special tangles and special circles. Let $st(D)$ denote the number of special tangles in $D(K)$ (or, equivalently, the number of special circles in $H_{A}$). 
\end{definition}

\begin{remark}
The advantage to looking for special circles in $H_{A}$, as opposed to looking for special tangles in $D(K)$, is that special circles are necessarily disjoint. Special tangles, on the other hand, can share one or both twist regions with another special tangle. 
\end{remark}

\begin{figure}
	\centering
		\def\svgwidth{4in}
		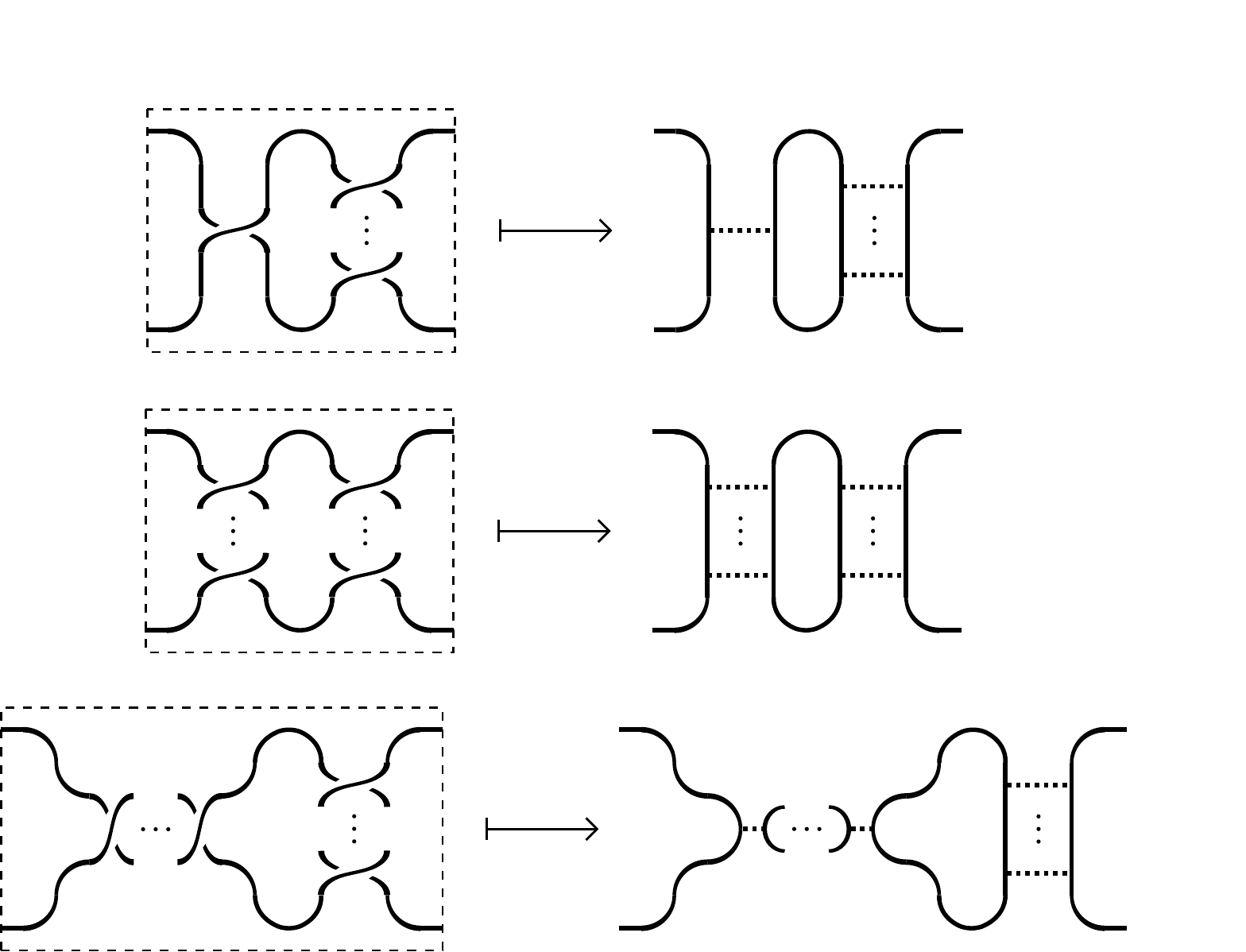
	\caption{Special tangles of $D(K)$ and the corresponding special circles, $C$, of $H_{A}$.}
	\label{specialtangles}
\end{figure}

\noindent By combining results from \cite{New}, \cite{Guts}, and \cite{Lackenby}, we get the following key result:

\begin{theorem}[Corollary 1.4 of \cite{New}, Theorem from Appendix of \cite{Lackenby}]
Let $D(K)$ be a connected, prime, A-adequate link diagram that satisfies the TELC and contains $t(D) \geq 2$ twist regions. Then $K$ is hyperbolic and:
\label{Cor}
\begin{equation}
-v_{8}\cdot\chi(\mathbb{G}_{A}') \leq \mathrm{vol}(S^{3}\backslash K) < 10v_{3}\cdot(t(D)-1).
\end{equation}
\end{theorem}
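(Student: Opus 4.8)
The plan is to assemble the two displayed inequalities from the three already-established sources, so the bulk of the work is hypothesis-checking rather than new geometry. The hyperbolicity assertion will come from \cite{New}, the lower bound from the guts machinery of \cite{Guts}, and the upper bound is exactly the Agol--D.~Thurston estimate in the appendix of \cite{Lackenby}. First I would confirm that the standing hypotheses --- $D(K)$ connected, prime, A-adequate, satisfying the TELC, with $t(D)\geq 2$ --- are precisely the hypotheses under which Corollary~1.4 of \cite{New} guarantees that $K$ is hyperbolic. This disposes of the hyperbolicity claim and, crucially, certifies that $\mathrm{vol}(S^{3}\backslash K)$ is a genuine hyperbolic volume to which the remaining estimates may be applied.

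For the lower bound I would invoke the guts bound of \cite{Guts} applied to the all-A state surface $S_{A}$. By A-adequacy this surface is essential, so the complement $S^{3}\backslash S_{A}$ may be cut along its essential product disks, and \cite{Guts} bounds the volume below by $v_{8}$ times the negative Euler characteristic of the resulting guts. The key step is to identify this guts quantity with $-\chi(\mathbb{G}_{A}')=e(\mathbb{G}_{A}')-v(\mathbb{G}_{A}')$. This is exactly where the TELC does its work: a priori, a pair of parallel A-segments (a multi-edge of $\mathbb{G}_{A}$ collapsed to a single edge of $\mathbb{G}_{A}'$) can produce an essential product disk that shrinks the guts below $-\chi(\mathbb{G}_{A}')$, but the TELC forces every such pair of shared A-segments to come from a single short twist region, so that no complex (non-product) correction term survives. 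Once the guts is shown to equal $-\chi(\mathbb{G}_{A}')$, combining it with the hyperbolicity supplied by \cite{New} yields $-v_{8}\cdot\chi(\mathbb{G}_{A}')\leq\mathrm{vol}(S^{3}\backslash K)$.

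For the upper bound I would appeal directly to the theorem of Agol and D.~Thurston in the appendix of \cite{Lackenby}, which gives $\mathrm{vol}(S^{3}\backslash K)<10v_{3}\cdot(t(D)-1)$ for a link admitting a prime diagram with $t(D)$ twist regions, via augmenting each twist region by a crossing circle and controlling the volume of the fully augmented link under Dehn filling. Here the only things to verify are that our prime diagram meets the diagrammatic hypotheses of that estimate and that $t(D)$ as defined here (longest strings of bigons) agrees with the twist number used there. Concatenating this with the previous paragraph produces the full two-sided bound.

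I expect the main obstacle to be the middle step: justifying that under the TELC the guts of $S^{3}\backslash S_{A}$ carries no complex part, so that its negative Euler characteristic is exactly $-\chi(\mathbb{G}_{A}')$ rather than something strictly smaller. Everything else is a matter of matching hypotheses to the cited statements and splicing the two inequalities together.
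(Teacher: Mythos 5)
Your proposal matches the paper exactly: the paper offers no independent proof of this statement, presenting it as a direct combination of Corollary~1.4 of \cite{New} (hyperbolicity), the guts computation of \cite{Guts} under the TELC giving $-\chi(\mathbb{G}_{A}')$ as the lower-bound quantity, and the Agol--D.~Thurston upper bound from the appendix of \cite{Lackenby}. Your identification of the TELC as the hypothesis that kills the complex part of the guts, so that the guts Euler characteristic equals $-\chi(\mathbb{G}_{A}')$, is precisely the role it plays in the cited sources.
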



\section{Volume Bounds for A-Adequate Links}

\subsection{Twist Regions, State Circles, and $\mathbb{G}_{A}'$}
\label{twisty}

We begin with a study of the twist regions of an A-adequate link diagram $D(K)$ that satisfies the TELC. Because long and short resolutions are not distinguishable when there is only one crossing in a twist region, we will begin by considering the case of one-crossing twist regions. See Fig.~\ref{resolutions} for a one-crossing twist region and its A-resolution. Let $C_{1}$ and $C_{2}$ denote the (portions of the) relevant all-A circles in $H_{A}$. Since $D(K)$ is A-adequate, then $C_{1} \neq C_{2}$. Since $D(K)$ satisfies the TELC, then there can be no other additional A-segments between $C_{1}$ and $C_{2}$. Thus, the edge of $\mathbb{G}_{A}$ corresponding to this one-crossing twist region can never be a redundant parallel edge and, therefore, will always appear in $\mathbb{G}_{A}'$. 

\begin{remark} Let $t_{1}(D)$ denote the number of one-crossing twist regions in $D(K)$. By what was said in the above paragraph, $t_{1}(D)$ is also the number of edges in $\mathbb{G}_{A}'$ that come from the one-crossing twist regions of $D(K)$. 
\label{t1}
\end{remark}

Let us now consider twist regions that have at least two crossings (the short and long twist regions). See the right side of Fig.~\ref{longshort} for a twist region and its short resolution. If we again use $C_{1}$ and $C_{2}$ to denote the (portions of the) the relevant all-A circles, then the A-adequacy of $D(K)$ implies that $C_{1}\neq C_{2}$ and the TELC implies that there can be no other A-segments between $C_{1}$ and $C_{2}$ (besides those of the short resolution). Furthermore, note that a short twist region will always create redundant parallel edges in $\mathbb{G}_{A}$ since the parallel A-segments of $H_{A}$ join the same pair of state circles. Thus, all but one of these edges is removed when forming $\mathbb{G}_{A}'$. Said another way, there will be one edge of $\mathbb{G}_{A}'$ per short twist region of $D(K)$. 

\begin{remark} Let $t_{s}(D)$ denote the number of short twist regions in $D(K)$. By what was said in the above paragraph, $t_{s}(D)$ is also the number of edges in $\mathbb{G}_{A}'$ that come from the short twist regions in $D(K)$.
\label{tshort}
\end{remark}

See the left side of Fig.~\ref{longshort} for a twist region and its long resolution. We will use $C_{1}$ and $C_{2}$ to denote the upper and lower (portions of the) relevant state circles. If there are three or more crossings in the twist region being considered, then it must necessarily be the case that none of the corresponding edges in $\mathbb{G}_{A}$ are lost in the reduction to form $\mathbb{G}_{A}'$. If there are two crossings in the twist region, then the TELC implies that $C_{1}\neq C_{2}$ because, otherwise, we would have a two-edge loop in $\mathbb{G}_{A}$ coming from a long twist region. As a result, we have that no edges of $\mathbb{G}_{A}$ coming from long resolutions are removed when forming $\mathbb{G}_{A}'$. 

Recall that a long resolution will consist of (portions of) two state circles joined by a path of A-segments and (small) all-A circles.

\begin{definition} We call each (small) all-A circle in the interior of the long resolution a \emph{small inner circle (SIC)}. The remaining all-A circles in the rest of $H_{A}$ will simply be called \emph{other circles (OCs)}.  
\label{acircletypes}
\end{definition}

\noindent \textbf{Notation:} Let $t_{l}(D)$ denote the number of long twist regions in $D(K)$ and let $e_{l}(\mathbb{G}_{A}')$ denote the number of edges in $\mathbb{G}_{A}'$ coming from long twist regions. 

\bigskip

By inspection, it can be seen that the number of A-segments in the long resolution is always one greater than the number of small inner circles in the long resolution. Since this phenomenon occurs for each long resolution, then we have that:

\begin{equation}
\#\left\{SICs\right\}=e_{l}(\mathbb{G}_{A}')-t_{l}(D).
\label{ic}
\end{equation}

\subsection{Computation of $-\chi(\mathbb{G}_{A}')$}

\begin{lemma} Let $D(K)$ be a connected A-adequate link diagram that satisfies the TELC. Then we have that:
\label{chilemma}
\begin{equation}
-\chi(\mathbb{G}_{A}')=t(D)-\#\left\{OCs\right\}.
\end{equation}
\end{lemma}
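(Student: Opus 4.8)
The plan is to compute $-\chi(\mathbb{G}_{A}')$ from its definition, $-\chi(\mathbb{G}_{A}')=e(\mathbb{G}_{A}')-v(\mathbb{G}_{A}')$, by counting the vertices and edges of $\mathbb{G}_{A}'$ separately using the bookkeeping assembled in Section~\ref{twisty}. The whole argument is then an Euler-characteristic accounting: the substantive structural content (which edges survive the reduction to $\mathbb{G}_{A}'$ and how many small inner circles a long resolution produces) has already been established, so the lemma reduces to collecting those facts and cancelling.

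First I would count vertices. By Remark~\ref{circleremark}, $v(\mathbb{G}_{A}')$ equals the number of all-A circles in $H_{A}$, and by Definition~\ref{acircletypes} every all-A circle is either a small inner circle or an other circle. Since these two families are disjoint and exhaustive, the vertex count splits as $v(\mathbb{G}_{A}')=\#\{SICs\}+\#\{OCs\}$. Next I would count edges. Every A-segment comes from a crossing, and every crossing lies in a one-crossing, short, or long twist region, so these three families account for all edges of $\mathbb{G}_{A}'$. By Remarks~\ref{t1} and~\ref{tshort}, the one-crossing and short twist regions contribute exactly $t_{1}(D)$ and $t_{s}(D)$ edges respectively, while the long twist regions contribute $e_{l}(\mathbb{G}_{A}')$ edges by definition of that quantity; hence $e(\mathbb{G}_{A}')=t_{1}(D)+t_{s}(D)+e_{l}(\mathbb{G}_{A}')$.

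Finally I would combine the two counts and substitute \eqref{ic}, namely $\#\{SICs\}=e_{l}(\mathbb{G}_{A}')-t_{l}(D)$. This makes the $e_{l}(\mathbb{G}_{A}')$ terms cancel, leaving $t_{1}(D)+t_{s}(D)+t_{l}(D)-\#\{OCs\}$. Since every twist region of $D(K)$ is of exactly one of the three types, we have $t_{1}(D)+t_{s}(D)+t_{l}(D)=t(D)$, and the desired identity $-\chi(\mathbb{G}_{A}')=t(D)-\#\{OCs\}$ follows.

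There is no serious obstacle in the computation itself; the only point that requires care is verifying that the vertex partition (into SICs and OCs) and the edge partition (by twist-region type) are genuinely disjoint and exhaustive, so that no circle or surviving edge is double-counted or omitted. Both facts are immediate from Definition~\ref{acircletypes} and from the per-twist-region analysis preceding Remarks~\ref{t1}, \ref{tshort}, and equation~\eqref{ic}, where it was shown that one-crossing and short regions each contribute a single edge to $\mathbb{G}_{A}'$ and that long regions lose no edges in the reduction. With those structural facts in hand, the identity is purely formal.
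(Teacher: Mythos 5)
Your proposal is correct and follows essentially the same route as the paper's proof: both compute $-\chi(\mathbb{G}_{A}')=e(\mathbb{G}_{A}')-v(\mathbb{G}_{A}')$, split the vertices into SICs and OCs, split the edges by twist-region type via Remarks~\ref{t1} and~\ref{tshort}, and then cancel using Eq.~(\ref{ic}) and $t(D)=t_{1}(D)+t_{s}(D)+t_{l}(D)$. No gaps.
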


\begin{proof}
By Remark~\ref{circleremark} and Definition~\ref{acircletypes}, we get:
\begin{eqnarray}
-\chi(\mathbb{G}_{A}') & = & e(\mathbb{G}_{A}')-v(\mathbb{G}_{A}')\nonumber\\
\ &  = & e(\mathbb{G}_{A}')-\#\left\{\text{all-A\ state\ circles}\right\}\nonumber\\
\ & =  & e(\mathbb{G}_{A}')-\#\left\{SICs\right\}-\#\left\{OCs\right\}.
\label{chistart}
\end{eqnarray}

\noindent Looking at how the twist regions of $D(K)$ were partitioned in Section~\ref{twisty}, we get:
\begin{equation}
t(D)=t_{1}(D)+t_{s}(D)+t_{l}(D).
\label{twistclasses}
\end{equation}

\noindent Next, Remark~\ref{t1} and Remark~\ref{tshort} imply that:
\begin{equation}
e(\mathbb{G}_{A}')=t_{1}(D)+t_{s}(D)+e_{l}(\mathbb{G}_{A}').
\label{edgeclasses}
\end{equation}

\noindent By substituting Eq.~(\ref{edgeclasses}) and Eq.~(\ref{ic}) into Eq.~(\ref{chistart}) and then using Eq.~(\ref{twistclasses}), we get:
\begin{eqnarray}\label{chi}
-\chi(\mathbb{G}_{A}')& = & e(\mathbb{G}_{A}')-\#\left\{SICs\right\}-\#\left\{OCs\right\}\nonumber\\
& = & [t_{1}(D)+t_{s}(D)+e_{l}(\mathbb{G}_{A}')]-[e_{l}(\mathbb{G}_{A}')-t_{l}(D)]-\#\left\{OCs\right\}\nonumber\\
& = & t(D)-\#\left\{OCs\right\}.
\end{eqnarray}\\
\end{proof}

\subsection{Special Circles and Special Tangles}


\begin{lemma}\label{tanglemma}
Let $D(K)$ be a connected, prime, A-adequate link diagram that satisfies the TELC and contains $t(D)\geq2$ twist regions. Furthermore, assume that $D(K)$ is not the link diagram depicted in Fig.~\ref{twistknot}. Then: 
\begin{itemize}
\item[(1)] each $OC$ of $H_{A}$ is incident to A-segments from at least two twist regions of $D(K)$ 
\item[(2)] the $OCs$ of $H_{A}$ that are incident to A-segments from exactly two twist regions of $D(K)$ are precisely the special circles ($SCs$) of $H_{A}$.
\end{itemize} 
\end{lemma}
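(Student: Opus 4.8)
The plan is to reduce both parts to a single local question about an all-A circle of $H_A$: how many distinct twist regions of $D(K)$ contribute A-segments to it, and how many segments come from each? Recall from the analysis in Section~\ref{twisty} that a one-crossing region contributes a single segment, a short region with $k\geq 2$ crossings contributes all $k$ of its parallel segments to each of its two end circles, and a long region contributes a single segment to each end circle while its interior circles (the $SICs$) each meet exactly two segments, both from that \emph{same} long region. Thus, by Definition~\ref{acircletypes}, an all-A circle incident to A-segments from at most one twist region is either an $SIC$ or a degenerate ``end cap'' of a single region, while every $OC$ is by definition a non-$SIC$ circle. Proving part (1) therefore amounts to excluding the end-cap possibilities, and proving part (2) amounts to classifying the circles that see exactly two regions.

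For part (1), I would first dispose of the case of zero incident twist regions: such a circle carries no A-segment, so it is an isolated vertex of $\mathbb{G}_A$, contradicting the fact that connectivity of $D(K)$ forces $H_A$ to be connected while $t(D)\geq 2$ guarantees at least one A-segment. For an $OC$ $C$ incident to A-segments from exactly one twist region $T$, I would split on the type of $T$. If $T$ is a one-crossing region, then $C$ carries a single A-segment and the associated crossing is nugatory, contradicting primeness. If $T$ is short or long, then $C$ is an end circle whose only segments belong to $T$, so $C$ ``caps off'' the two strands on one side of $T$ with a trivial turnback; analyzing this turnback with primeness and the TELC, I expect to show that the only connected, prime, A-adequate, TELC diagram with $t(D)\geq 2$ that realizes it is precisely the excluded diagram of Fig.~\ref{twistknot}, while in every other case the capping either produces a nugatory crossing or forces $C$ to meet a second region. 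This gives (1).

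For part (2), the inclusion that every special circle is an $OC$ meeting exactly two twist regions is essentially definitional: by Definition~\ref{spectangdef} a special tangle is a tangle sum of precisely two twist regions, cut out by a curve meeting $D(K)$ in four points, and the special circle $C$ is the circle sitting between them, so its A-segments come from exactly those two regions (see Fig.~\ref{specialtangles}); since the two regions are distinct, the two sides of $C$ cannot both lie in a single long resolution, so $C$ is not an $SIC$ and hence is an $OC$. The substantive direction is the converse: given an $OC$ $C$ meeting A-segments from exactly two twist regions $T_1$ and $T_2$, I would show the local tangle is one of the three types of Definition~\ref{spectangdef}. The plan is a case analysis on the unordered pair of types of $(T_1,T_2)$, each chosen among the one-crossing, short, and long types, using A-adequacy to forbid a self-joining segment, the TELC to control when two circles may share a pair of segments, primeness to ensure the four-punctured curve around $C$ bounds exactly $T_1\cup T_2$ with no hidden crossings, and the long/short resolution conventions of Fig.~\ref{longshort} to read off the orientations (short regions vertical, long region horizontal).

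The main obstacle is the elimination of the ``wrong'' type-pairs, and the key tool is the maximality built into the definition of a twist region. For the pair (one-crossing, one-crossing), summing the two single crossings at $C$ should produce a bigon, so by maximality the two crossings would belong to a single twist region, contradicting $T_1\neq T_2$ (with the opposite-sign subcase instead eliminated by reducibility and A-adequacy); I expect the same maximality argument, combined with the resolution conventions, to rule out (one-crossing, long) and (long, long) and to force each surviving configuration to contain a short region. The three remaining pairs, namely (short, one-crossing), (short, short), and (long, short), should then match types (1), (2), and (3) of Definition~\ref{spectangdef} once the orientations are read off from Fig.~\ref{longshort}. I anticipate that the most delicate bookkeeping will be verifying that no additional A-segments or crossings can intrude on $C$ in any surviving case, so that the hypothesis ``exactly two twist regions'' is genuinely preserved and the classification is exhaustive.
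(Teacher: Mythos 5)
Your overall strategy---classify an $OC$ by how many distinct twist regions feed A-segments into it, then eliminate the degenerate counts using A-adequacy, the TELC, primeness, and the maximality of twist regions---is the same strategy the paper uses, and several pieces (the zero-region case, the forward inclusion in (2), and the bigon/maximality argument that collapses the pairs $(1,1)$, $(1,l)$, $(l,l)$ when both regions exit $C$ to other circles) match the paper's proof. But there is a genuine structural gap: you have misplaced the role of the excluded diagram of Fig.~\ref{twistknot}. That diagram is \emph{not} the surviving case of an $OC$ incident to a single twist region; in part (1) the single-region case is always a contradiction with no exception needed (a self-returning one-crossing or short region violates A-adequacy; a self-returning long region forces $D(K)$ to be a $(2,p)$-torus diagram with $t(D)=1$; an outgoing region violates either connectivity-plus-$t(D)\geq 2$ or primeness, depending on whether the other end circle carries further crossings). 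The exceptional diagram instead arises in part (2), from an $OC$ incident to \emph{two} long twist regions each of whose end segments both return to $C$ (Fig.~\ref{twoselfok}).

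Because of this, your part (2) case analysis---organized only by the unordered pair of region types---is not exhaustive where it matters. A long resolution contributes one segment to each of its two end circles, and those end circles may coincide with $C$; so for each of $T_1,T_2$ you must additionally split on whether its segments return to $C$ or exit to another circle, and then on the planar arrangement (nested versus unnested, same versus opposite sides of $C$). That is where primeness does almost all of the work in the paper (Figs.~\ref{twoself}, \ref{twomix}, \ref{twocomp}), and it is exactly where the excluded diagram survives every hypothesis except its explicit exclusion. Your proposed maximality argument would wrongly ``rule out'' the $(l,l)$ pair: it applies only when both long regions contribute a single outgoing segment to $C$ (so that $C$ carries exactly two segments and the adjacent crossings form a bigon), not when a long region returns to $C$ and contributes two segments. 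You also need the observation recorded in Remark~\ref{specialremark} that, in the three surviving configurations, primeness forces $C_1$ and $C_2$ to be joined elsewhere in $H_A$; your ``no hidden crossings'' sentence gestures at this but does not supply it. Until the self-returning subcases are added and the exclusion of Fig.~\ref{twistknot} is invoked at that point rather than in part (1), the classification in (2) is incomplete.
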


\begin{remark}
Given the lemma above, notice that $st(D)=0$ (there are no $SCs$) for link diagrams $D(K)$ whose $OCs$ are all incident to A-segments from at least three twist regions of $D(K)$. 
\end{remark}

\begin{figure}
	\centering
		\includegraphics[width=1in]{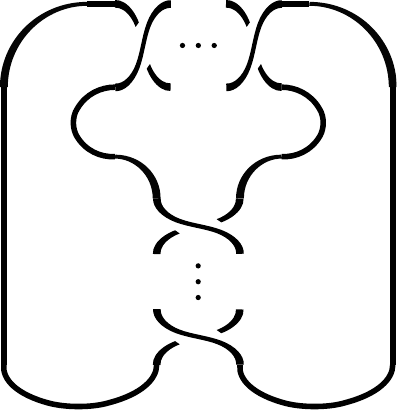}
	\caption{The exceptional link diagram consisting of two long twist regions, each of which must contain at least three crossings.}
	\label{twistknot}
\end{figure}


\begin{proof}
See Fig.~\ref{othertwist} for schematic depictions of the A-resolutions of the twist regions of $D(K)$. Let $C$ denote an $OC$ of the all-A state $H_{A}$. Such a circle must exist because, otherwise, we would have that $H_{A}$ is a cycle of small inner circles and A-segments. Since this all-A state corresponds to the standard $(2,p)$-torus link diagram, then we would get a contradiction of the assumption that $t(D)\geq2$.   

\begin{figure}
	\centering
	\def\svgwidth{3in}
		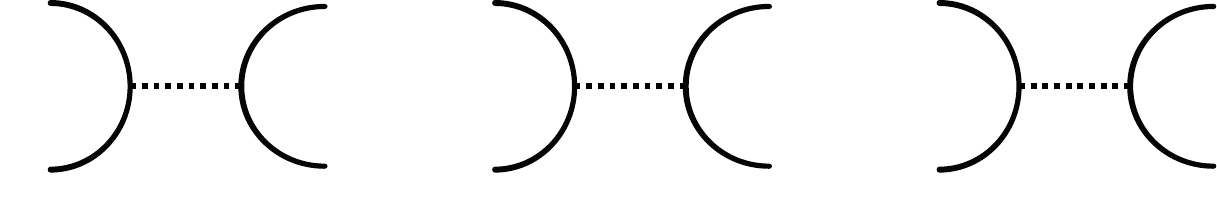
	\caption{A schematic depiction of the A-resolutions of the three possible types of twist regions incident to an $OC$, call it $C$, of $H_{A}$: one-crossing (left), short (middle), and long (right). The labels 1, $s$, and $l$ are used not only to indicate the type of twist region resolution, but also to distinguish these schematic segments from A-segments.}
	\label{othertwist}
\end{figure}

Suppose $C$ is an $OC$ incident to no A-segments. Then $C$ corresponds to a standard unknotted component of $D(K)$. Thus, either $D(K)$ is not connected or $D(K)$ is the standard unknot diagram. In either case we get a contradiction, given the assumptions that $D(K)$ is connected and contains $t(D)\geq2$ twist regions. 

Next, suppose $C$ is an $OC$ incident to A-segments from a single twist region. First, suppose that the A-segments of this twist region both start and end at $C$. If this twist region were a one-crossing twist region or a short twist region, then (recalling Fig.~\ref{resolutions} and Fig.~\ref{longshort} if needed) we get a contradiction of the assumption that $D(K)$ is A-adequate. If this twist region were a long twist region, then we get a contradiction of the assumption that $D(K)$ is connected and contains $t(D)\geq2$ twist regions. Second, suppose that the A-segments of this twist region start at $C$ and end at another all-A circle, call it $C'$. Then consider the portion of $H_{A}$ corresponding to Fig.~\ref{othertwist} and recall that $C$ and $C'$ are closed curves. If $C'$ were incident to no other additional twist region resolutions, then we get a contradiction of the assumption that $D(K)$ is connected and contains $t(D)\geq2$ twist regions. If $C'$ were incident to additional twist region resolutions, then we get a contradiction of the assumption that $D(K)$ is prime. This proves the first assertion of the lemma.

Now suppose that $C$ is an $OC$ incident to A-segments from exactly two twist regions. If the A-segments of a twist region both start and end at $C$, then this twist region must be a long twist region because (as seen in the paragraph above) we would otherwise get a contradiction of the assumption that $D(K)$ is A-adequate. 

Consider the case where both (long) twist regions give A-segments that start and end at $C$. The first three possibilities are depicted in Fig.~\ref{twoself}. As the rectangular dashed closed curves in the figure indicate, we get a contradiction of the primeness of the corresponding diagram $D(K)$. The fourth and final possibility is depicted in Fig.~\ref{twoselfok}. Translating back to $D(K)$, we get the exceptional link diagram depicted in Fig.~\ref{twistknot}. Note that, by the TELC, it must be the case that there are at least three crossings per (long) twist region. This is because, otherwise, we would have a two-edge loop whose edges do not correspond to crossings of a short twist region. Recall that the link diagram of Fig.~\ref{twistknot} has been excluded from consideration. 

\begin{figure}
	\centering
	\def\svgwidth{3.5in}
		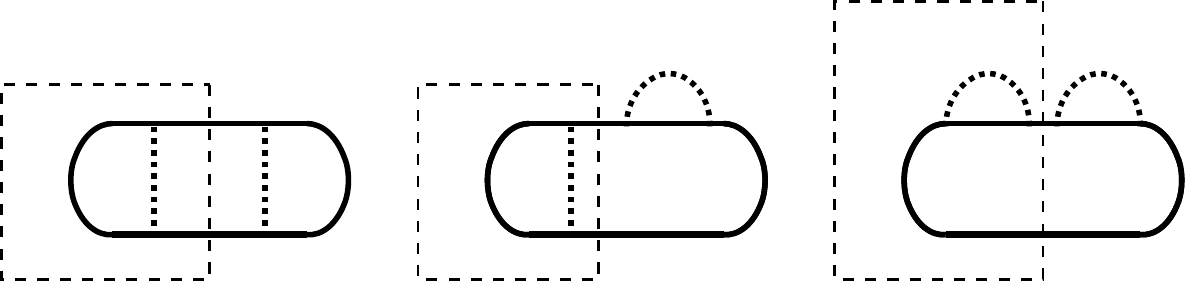
	\caption{Three possibilities for an $OC$, call it $C$, with two incident long resolutions that start and end at $C$.}
	\label{twoself}
\end{figure}

\begin{figure}
	\centering
	\def\svgwidth{1in}
		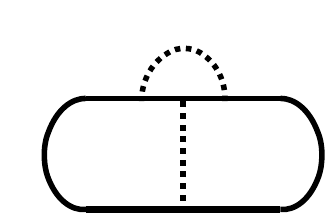
	\caption{The fourth possibility for an $OC$, call it $C$, with two incident long resolutions that start and end at $C$.}
	\label{twoselfok}
\end{figure}

Next, consider the case where one (long) twist region gives A-segments that start and end at $C=C_{1}$ and the other twist region gives A-segments that start at $C_{1}$ and end at another all-A circle, call it $C_{2}$. The three possibilities are depicted in Fig.~\ref{twomix}. As the rectangular dashed closed curves indicate, we get a contradiction of the primeness of the corresponding diagram $D(K)$.  

\begin{figure}
	\centering
	\def\svgwidth{4.5in}
		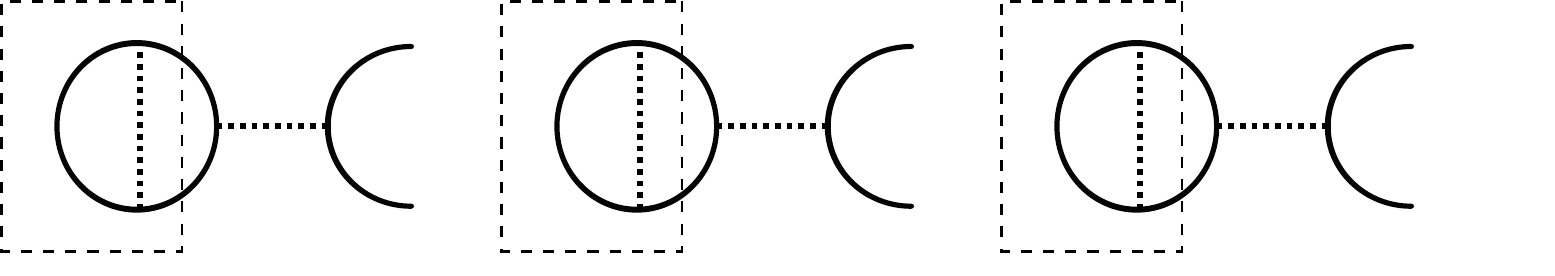
	\caption{Three possibilities for an $OC$, call it $C_{1}$, with two incident twist region resolutions, one resolution from a long twist region that starts and ends at $C_{1}$ and the other resolution connecting to a different state circle $C_{2}$.}
	\label{twomix}
\end{figure}

\begin{figure}
	\centering
	\def\svgwidth{1.5in}
		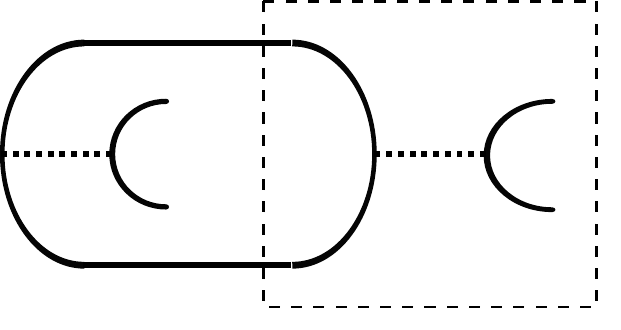
	\caption{An $OC$, call it $C$, with two incident twist region resolutions, one inside $C$ and one outside $C$.}
	\label{twocomp}
\end{figure}

\begin{figure}
	\centering
	\def\svgwidth{5in}
		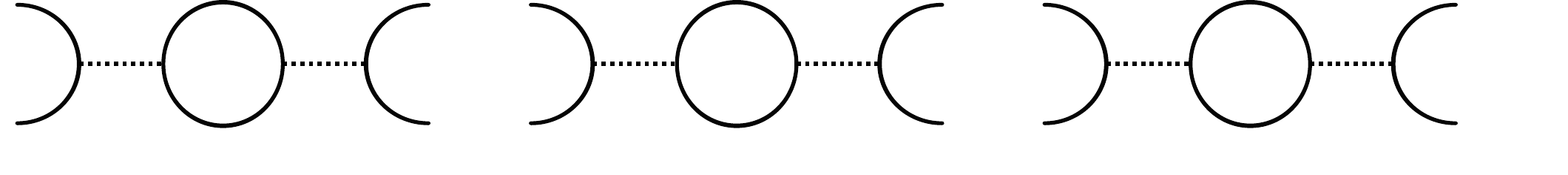
	\caption{Three possibilities for an ``$OC$'', call it $C$, with two incident twist region resolutions, one connecting $C$ to $C_{1}$ and the other connecting $C$ to $C_{2}$.}
	\label{twoout}
\end{figure}

Finally, consider the case where both twist regions give A-segments that start at $C$ and end at all-A circles, call them $C_{1}$ and $C_{2}$, that are different from $C$ (but where $C_{1}=C_{2}$ is possible in some cases). The first possibility, that $C_{1}$ and $C_{2}$ are on opposite sides of $C$, is depicted in Fig.~\ref{twocomp}. As the rectangular dashed closed curve indicates, we get a contradiction of the primeness of the corresponding diagram $D(K)$. Thus, $C_{1}$ and $C_{2}$ must be on the same side of $C$. The choice of side is irrelevant, however, since $D(K) \subseteq S^2$. The first three possibilities are depicted in Fig.~\ref{twoout}. None of these cases are possible because, by translating back to $D(K)$ (and recalling Fig.~\ref{resolutions} and Fig.~\ref{longshort} if needed), the A-segments from the two twist regions actually come from the same long twist region. This makes $C$ a small inner circle rather than an $OC$, a contradiction. The three remaining possibilities are depicted in Fig.~\ref{twooutmore}.   

\begin{remark} Note that, by the TELC, it must be the case that $C_{1} \neq C_{2}$ in the left and middle diagrams of Fig.~\ref{twooutmore}. However, because a long resolution involves a path of at least two A-segments, then it is possible that $C_{1}=C_{2}$ in the right diagram of Fig.~\ref{twooutmore}. It is also important to note that, in all three diagrams, the remaining twist region resolutions and all-A circles (not depicted) must somehow join $C_{1}$ to $C_{2}$ in a second way. This is because, otherwise, there would exist a simple closed curve that cuts $C$ in half and separates $C_{1}$ from $C_{2}$, a contradiction of the primeness of the corresponding diagram $D(K)$. \label{specialremark}  
\end{remark}

Assuming the conditions laid out in the above remark are satisfied, notice that the three possibilities in Fig.~\ref{twooutmore} do not give a contradiction of the assumptions of Lemma~\ref{tanglemma}. Equally as important, notice that these three possibilities correspond exactly to the three types of special circles depicted on the right side of Fig.~\ref{specialtangles} (and to the three types of special tangles depicted on the left side of Fig.~\ref{specialtangles}).
\end{proof}

\begin{figure}
	\centering
	\def\svgwidth{5in}
		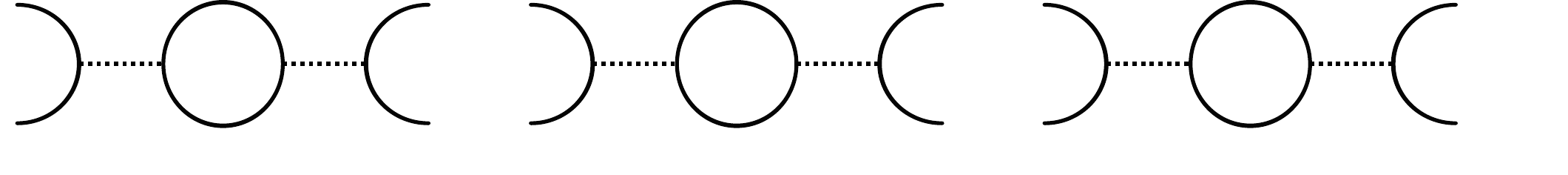
	\caption{Three remaining possibilities for an $OC$, call it $C$, with two incident twist region resolutions, one connecting $C$ to $C_{1}$ and the other (a short twist region resolution) connecting $C$ to $C_{2}$.}
	\label{twooutmore}
\end{figure}

\subsection{Volume Bounds in Terms of $t(D)$ and $st(D)$}


In this section, we will shift perspective from the link diagram $D(K)$ and its all-A state $H_{A}$ to the reduced all-A graph $\mathbb{G}_{A}'$. By combining some graph theory with our previous computation of $-\chi(\mathbb{G}_{A}')$ (Lemma~\ref{chilemma}) and our newly acquired knowledge about special circles (Lemma~\ref{tanglemma}), we will prove the Main Theorem (Theorem~\ref{mainthm}). 

\begin{definition} Let $G$ be a graph. We call $G$ \emph{simple} if it contains neither one-edge loops connecting a vertex to itself nor multiple edges connecting the same pair of vertices. 
\end{definition}

\noindent \textbf{Notation:} For $G$ a simple graph, let $V(G)$ denote its vertex set and let $E(G)$ denote its edge set. Furthermore, let deg$(v)$ denote the degree of the vertex $v$, that is, the number of edges incident to $v$.

\begin{proof}[Proof of the Main Theorem]
We will begin by using Theorem 2.1 of \cite{Graph} which states that, for $G$ a simple graph:
\begin{equation}\label{FTGT}
\sum_{v\in V(G)}\mathrm{deg}(v)=2\left|E(G)\right|.
\end{equation}

\noindent Our strategy will be to apply this result to the reduced all-A graph $\mathbb{G}_{A}'$. We can do this because A-adequacy of $D(K)$ implies that $\mathbb{G}_{A}'$ will not contain any loops and the fact that $\mathbb{G}_{A}'$ is reduced implies that $\mathbb{G}_{A}'$ will not contain any multiple edges. 

By Remark~\ref{circleremark}, Definition~\ref{acircletypes}, and Lemma~\ref{tanglemma}, we may partition $V(\mathbb{G}_{A}')$ into three types of vertices:


\begin{itemize}
\item[(1)] those corresponding to small inner circles ($SICs$), 
\item[(2)] those corresponding to special circles ($SCs$), which are $OCs$ that are incident to A-segments from exactly two twist regions, and
\item[(3)] those corresponding to the $OCs$ that are incident to A-segments from three or more twist regions (\emph{remaining OCs}).
\end{itemize}

Recall that, as said in the paragraph following Remark~\ref{tshort}, all edges corresponding to a long resolution survive the reduction to $\mathbb{G}_{A}'$. Thus, we have that deg$(v)=2$ for $v$ corresponding to a small inner circle. (See the left side of Fig.~\ref{longshort}.) Also recall that, as said in the paragraph preceding Remark~\ref{t1}, the edge corresponding to a one-crossing twist region survives the reduction to $\mathbb{G}_{A}'$. Finally, as said in the paragraph following Remark~\ref{t1}, only a single edge coming from a short twist region survives the reduction to $\mathbb{G}_{A}'$. By applying this knowledge to Fig.~\ref{twooutmore}, we see that deg$(v)=2$ for $v$ corresponding to a special circle. Similarly, we can see that deg$(v)\geq 3$ for $v$ corresponding to a \emph{remaining OC}. By translating Eq.~(\ref{FTGT}) to our setting, we get:
\begin{eqnarray}
2\cdot e(\mathbb{G}_{A}') & = & \sum_{SICs}\mathrm{deg}(v)+\sum_{SCs} \mathrm{deg}(v)+\sum_{remaining\ OCs}\mathrm{deg}(v)\nonumber \\
\ & = & 2\cdot\left[\#\left\{SICs\right\}\right]+2\cdot st(D)+\sum_{remaining\ OCs}\mathrm{deg}(v).
\label{sum}
\end{eqnarray}
\noindent Substituting Eq.~(\ref{edgeclasses}) and Eq.~(\ref{ic}) into Eq.~(\ref{sum}), we get:
\begin{eqnarray}
2\cdot t_{1}(D)+2\cdot t_{s}(D)+2\cdot e_{l}(\mathbb{G}_{A}') & = & 2\cdot\left[e_{l}(\mathbb{G}_{A}')-t_{l}(D)\right]\nonumber\\ 
\ & \ & + 2\cdot st(D)+ \sum_{remaining\ OCs}\mathrm{deg}(v).
\end{eqnarray}
\noindent By canceling, rearranging terms, and using Eq.~(\ref{twistclasses}), we end up with the following:
\begin{eqnarray}
2\cdot st(D)+\sum_{remaining\ OCs}\mathrm{deg}(v) & = & 2\cdot t_{1}(D)+2\cdot t_{s}(D)+2\cdot t_{l}(D)\nonumber\\
\ & = & 2t(D).
\label{degree}
\end{eqnarray}
\noindent Recall that deg$(v)\geq 3$ for $v$ corresponding to a \emph{remaining OC}. Thus, we get:
\begin{equation}
2\cdot st(D)+3\cdot\left[\#\left\{remaining\ OCs\right\}\right] \leq 2t(D).
\end{equation}
\noindent Adding $st(D)$, the number of $OCs$ that are not \emph{remaining OCs}, to both sides allows us to write the above inequality in terms of the total number of $OCs$ as:
\begin{eqnarray}
3\cdot\left[\#\left\{OCs\right\}\right] & = & 3\cdot st(D)+3\cdot\left[\#\left\{remaining\ OCs\right\}\right]\nonumber\\
\ & \leq & 2t(D)+st(D).
\end{eqnarray}
\noindent Combining this inequality with Lemma~\ref{chilemma} gives:
\begin{eqnarray}
-\chi(\mathbb{G}_{A}') & = & t(D)-\#\left\{OCs\right\}\nonumber\\
\ & \geq & t(D)-\left[\frac{2}{3}\cdot t(D) + \frac{1}{3}\cdot st(D)\right]\nonumber\\
\ & = & \frac{1}{3} \cdot \left[t(D)-st(D)\right].
\label{char}
\end{eqnarray}
\noindent Finally, by applying Inequality~(\ref{char}) to Theorem~\ref{Cor}, we get the desired volume bounds. 

Furthermore, notice that Eq.~(\ref{degree}) implies that $t(D) \geq st(D)$. Thus, we have that the lower bound on volume is always nonnegative and is positive precisely when there exists at least one \emph{remaining OC}. Looking at Eq.~(\ref{degree}) from another perspective, note that if $t(D)=st(D)$, then there are can be no \emph{remaining OCs} in the all-A state $H_{A}$. Hence, the only types of $OCs$ in this case are special circles. Since each special circle is incident to exactly two twist region resolutions (and since the conditions mentioned in Remark~\ref{specialremark} must be satisfied), then the all-A state $H_{A}$ must form a cycle alternating between special circles and twist region resolutions. But recall that special tangles (which correspond to special circles) are alternating tangles. Hence, by cyclically fusing these tangles together, we form an alternating link diagram. Consequently, in the case that $t(D)=st(D)$ (which forces the lower bound of the Main Theorem to be zero), Theorem $2.2$ of \cite{AgolStorm} can be used to provide a lower bound of $\displaystyle \frac{v_{8}}{2}\cdot(t(D)-2)$ on volume. \end{proof}

\begin{corollary}\label{mcor}
Let $D(K)$ satisfy the hypotheses of Theorem~\ref{mainthm}. Furthermore, assume that each $OC$ of $H_{A}$ has at least $m\geq3$ incident twist region resolutions. Then $K$ is hyperbolic, $st(D)=0$, and the complement of $K$ satisfies the following volume bounds:
\begin{equation}
\dfrac{m-2}{m}\cdot v_{8}\cdot t(D) \leq \mathrm{vol}(S^3\backslash K) < 10v_{3}\cdot(t(D)-1).
\end{equation}
\end{corollary}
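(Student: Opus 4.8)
The plan is to retrace the proof of the Main Theorem (Theorem~\ref{mainthm}), exploiting two simplifications that the extra hypothesis provides. First, hyperbolicity of $K$ and the upper bound $\mathrm{vol}(S^3\backslash K) < 10v_3\cdot(t(D)-1)$ require no new argument: since $D(K)$ satisfies the hypotheses of Theorem~\ref{mainthm}, it satisfies those of Theorem~\ref{Cor}, from which both conclusions are immediate.

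Next I would show $st(D)=0$ using Lemma~\ref{tanglemma}. Before applying that lemma I must check that $D(K)$ is not the excluded diagram of Fig.~\ref{twistknot}: there is always at least one $OC$ in $H_A$ under the Main Theorem hypotheses (as in the first paragraph of the proof of Lemma~\ref{tanglemma}), and by assumption it is incident to at least $m\geq 3$ twist region resolutions, forcing $t(D)\geq m\geq 3$; since the exceptional diagram has only $t(D)=2$, it cannot occur here. With Lemma~\ref{tanglemma} in force, part (2) identifies the special circles as precisely the $OCs$ that are incident to A-segments from exactly two twist regions. As every $OC$ is now incident to at least $m\geq 3$ twist regions, no $OC$ is special, so $st(D)=0$.

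The substantive step is to sharpen the degree estimate from the proof of the Main Theorem. With $st(D)=0$, the vertex set $V(\mathbb{G}_A')$ splits into small inner circles (each of degree $2$) and $OCs$, all of which are now \emph{remaining OCs}; hence Eq.~(\ref{degree}) collapses to $\sum_{OCs}\mathrm{deg}(v)=2t(D)$. The local observation I would verify is that each twist region incident to a given $OC$ contributes at least one edge of $\mathbb{G}_A'$ meeting the corresponding vertex: by the bookkeeping of Section~\ref{twisty}, a one-crossing or short twist region incident to $C$ yields one surviving edge at $C$, while a long twist region incident to $C$ joins $C$ to a small inner circle by at least one edge. Distinct twist regions give distinct edges, so $\mathrm{deg}(v)\geq m$ for every $OC$, whence $m\cdot\#\{OCs\}\leq \sum_{OCs}\mathrm{deg}(v)=2t(D)$ and therefore $\#\{OCs\}\leq \tfrac{2}{m}\,t(D)$.

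Finally, I would substitute this into Lemma~\ref{chilemma} to get
\[
-\chi(\mathbb{G}_A')=t(D)-\#\{OCs\}\geq t(D)-\tfrac{2}{m}\,t(D)=\tfrac{m-2}{m}\,t(D),
\]
and then apply the lower bound of Theorem~\ref{Cor} to conclude $\mathrm{vol}(S^3\backslash K)\geq -v_8\cdot\chi(\mathbb{G}_A')\geq \tfrac{m-2}{m}\,v_8\cdot t(D)$. The only place demanding genuine care is the bound $\mathrm{deg}(v)\geq m$: I must make sure that being incident to $m$ twist region resolutions really produces at least $m$ distinct edges at that vertex of $\mathbb{G}_A'$. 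This is precisely why the count is organized per twist region rather than per crossing, and why it matters that the reduction from $\mathbb{G}_A$ to $\mathbb{G}_A'$ keeps exactly one edge per short twist region while never deleting one-crossing or long-resolution edges.
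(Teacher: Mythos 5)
Your proposal is correct and follows essentially the same route as the paper: apply Lemma~\ref{tanglemma} to get $st(D)=0$, use the degree-sum identity of Eq.~(\ref{degree}) with $\mathrm{deg}(v)\geq m$ for every $OC$ to obtain $\#\{OCs\}\leq \tfrac{2}{m}t(D)$, feed this into Lemma~\ref{chilemma}, and conclude via Theorem~\ref{Cor}. The only difference is that you spell out two details the paper leaves implicit --- ruling out the exceptional diagram of Fig.~\ref{twistknot} and justifying that $m$ incident twist regions yield $m$ distinct edges of $\mathbb{G}_A'$ (which indeed follows from the TELC) --- both of which are welcome but not a change of method.
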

 
\begin{remark} Notice that, as $m \rightarrow \infty$, the lower bound in the corollary above approaches $v_{8} \cdot t(D)$. Hence, the coefficients of $t(D)$ in the upper and lower bounds differ by a multiplicative factor of 2.7701$\ldots$ (in the limit). 
\end{remark}

\begin{proof} We will prove this result by modifying what needs to be modified in the above proof of the Main Theorem. First, the assumption that each $OC$ has at least $m\geq3$ incident twist region resolutions implies, by Lemma~\ref{tanglemma}, that special circles cannot exist, so we have $st(D)=0$. This assumption also implies that deg$(v)\geq m\geq3$ for $v$ corresponding to an $OC$ (which must be a \emph{remaining OC}). By incorporating these conditions into Eq.~(\ref{degree}), we get:
\begin{eqnarray}
m\cdot\#\left\{OCs\right\} & \leq & 2 \cdot st(D)+\sum_{remaining\ OCs}\mathrm{deg}(v)\nonumber\\
\ &= & 2t(D).
\end{eqnarray}
\noindent Combining this inequality with Lemma~\ref{chilemma} gives:
\begin{eqnarray}
-\chi(\mathbb{G}_{A}') & = & t(D)-\#\left\{OCs\right\}\nonumber\\
\ & \geq & t(D)-\frac{2}{m}\cdot t(D)\nonumber\\
\ & = & \frac{m-2}{m} \cdot t(D).
\label{chicorbound}
\end{eqnarray}
\noindent Finally, by applying the above inequality to Theorem~\ref{Cor}, we get the desired volume bounds. \end{proof}

\begin{figure}
	\centering
		\def\svgwidth{3.5in}
		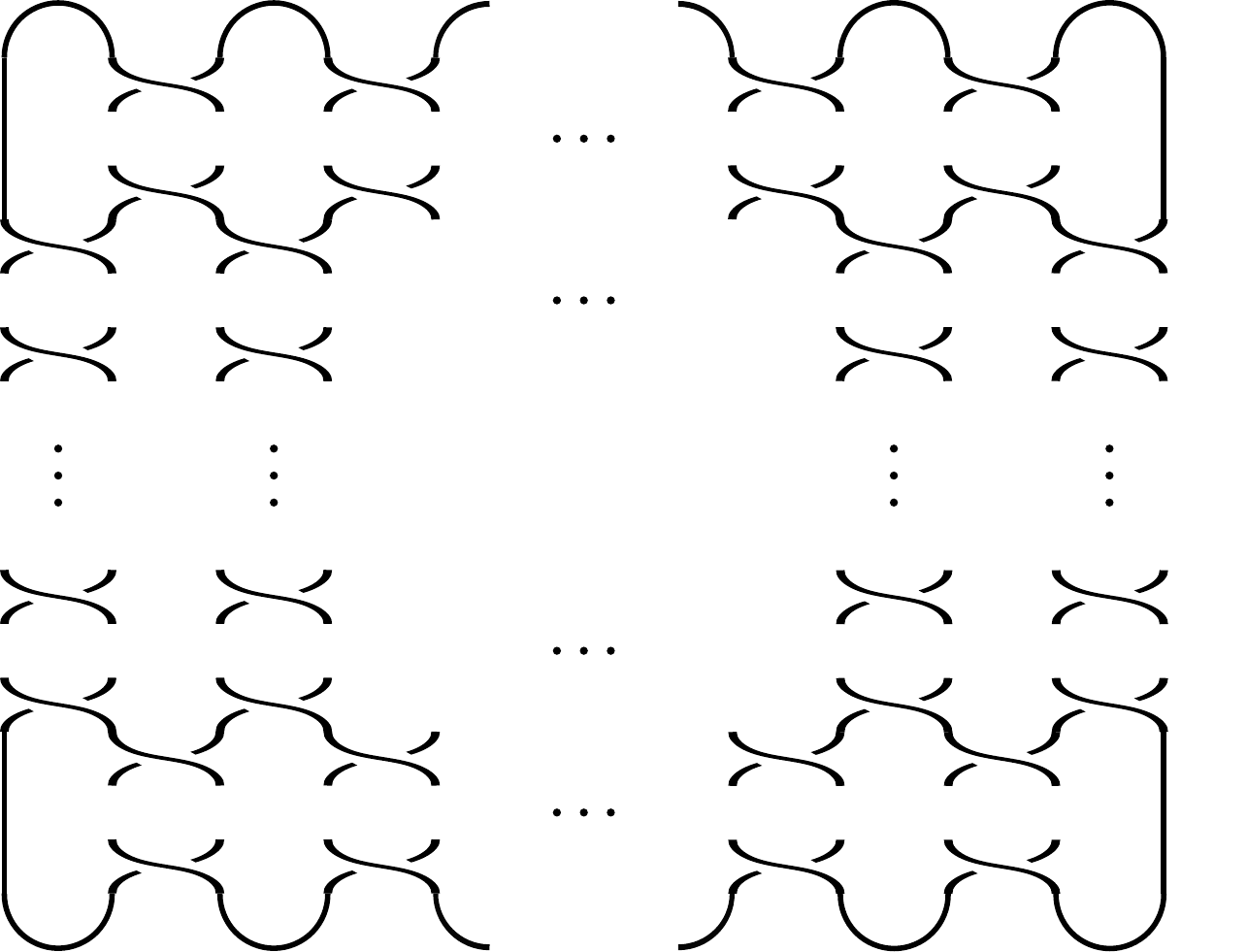
	\caption{A schematic depiction of a special $2n$-plat diagram with $m=2k+1$ rows of twist regions, where the entry in the $i^{\mathrm{th}}$ row and $j^{\mathrm{th}}$ column is a twist region containing $a_{i,j}$ crossings (counted with sign). The twist regions depicted above are negative twist regions. Having $a_{i,j}>0$ instead will reflect the crossings in the relevant twist region.}
	\label{newplat}
\end{figure}

\section{Volume Bounds for A-Adequate Plats}

To provide collections of links that satisfy the hypotheses of the Main Theorem (Theorem~\ref{mainthm}) and to seek to improve the lower bounds on volume, we will now investigate certain families of A-adequate plat diagrams. 

\subsection{Background on Plat Closures}
\label{secplat}
 
\begin{definition}Given a braid $\beta$ in the even-stringed braid group $B_{2n}$, we can form the \emph{plat closure} of $\beta$ by connecting string position $2i-1$ with string position $2i$ for each $1 \leq i \leq n$ by using trivial semicircular arcs at the top and bottom of these string positions. See Fig.~\ref{newplat} for a schematic depiction of the type of plat closure, call it a \emph{special plat closure}, that we will consider in this paper. 
\end{definition}

\noindent \textbf{Notation:} Let the special plat closure of $\beta \in B_{2n}$ have $m=2k+1$ rows of twist regions. Specifically, if we number the rows of twist regions from the top down, then there are $k+1$ odd-numbered rows, each of which contains $n-1$ twist regions, and $k$ even-numbered rows, each of which contains $n$ twist regions. Index the twist regions according to row and column (where by column we really mean the left-to-right ordering of twist regions in a given row). Denote the number of twist regions in row $i$ and column $j$ (counted with sign)  by $a_{i,j}$, where $1 \leq i \leq m$ and:
$$\left\{
        \begin{array}{ll}
   1 \leq j \leq n-1 & \mathrm{if} \ i\ \mathrm{is\ odd}\\
	 1 \leq j \leq n & \mathrm{if}\ i\ \mathrm{is\ even.}\\
        \end{array}
    \right.$$

\noindent Refer back to Fig.~\ref{newplat} to see this notation in use. 

\begin{remark} For the remainder of this paper, the term ``for all $i$ and $j$'' will be assumed to apply to $i$ and $j$ that satisfy the above conditions. 
\end{remark}

\begin{definition} 
Let $D(K)$ denote a special plat closure of a braid $\beta \in B_{2n}$, where $n \geq 3$, that contains $2k+1$ rows of twist regions, where $k \geq 1$. Then we call $D(K)$ a \emph{strongly negative plat diagram} if $a_{i,j} \leq -3$ in odd-numbered rows and $a_{i,j} \leq -2$ in even-numbered rows. Similarly, we call $D(K)$ a \emph{mixed-sign plat diagram} if $a_{i,j} \leq -3$ or $a_{i,j} \geq 1$ in odd-numbered rows and $a_{i,j} \leq -2$ in even-numbered rows. See the left side of Fig.~\ref{posplat} for an example of a strongly negative plat diagram and see the left side of Fig.~\ref{mixedplat} for an example of a mixed-sign plat diagram.  
\end{definition} 

\begin{figure}
	\centering
	\def\svgwidth{3.5in}
		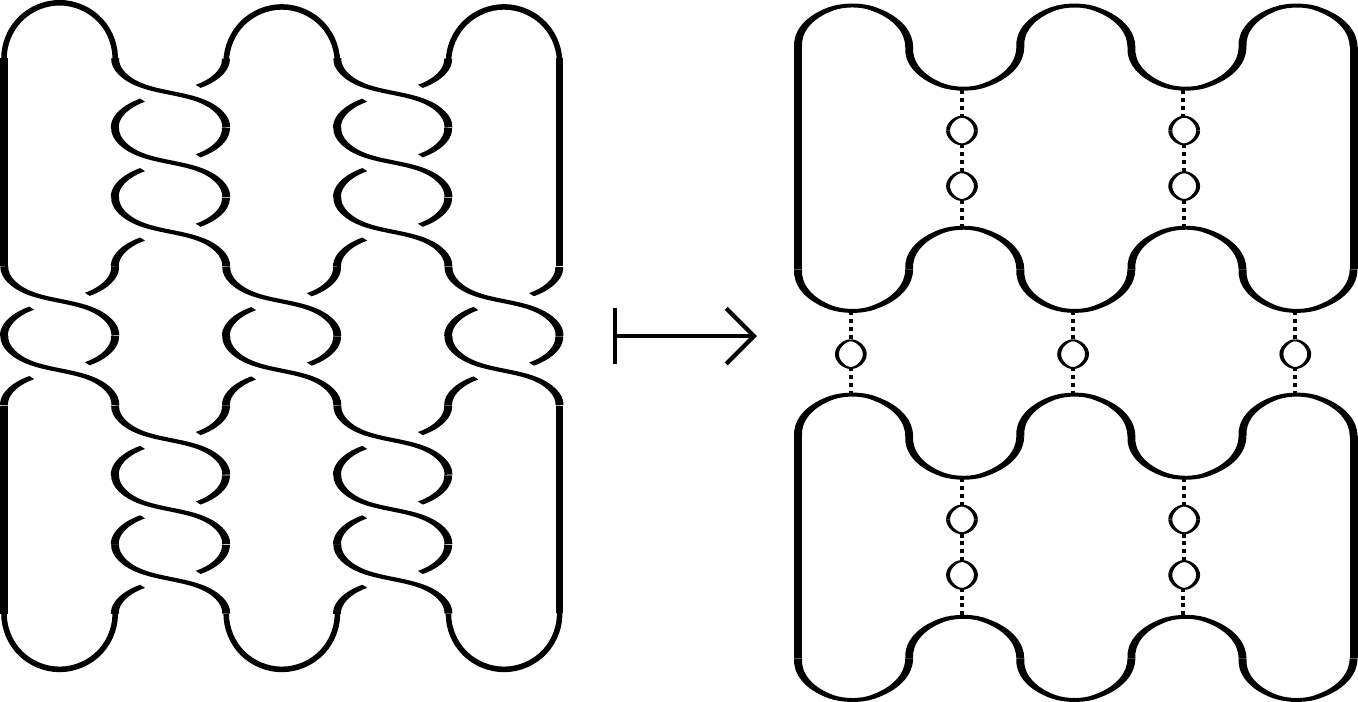
	\caption{An example of a strongly negative plat diagram and its all-A state.}
	\label{posplat}
\end{figure}

\begin{figure}
	\centering
	\def\svgwidth{3.5in}
		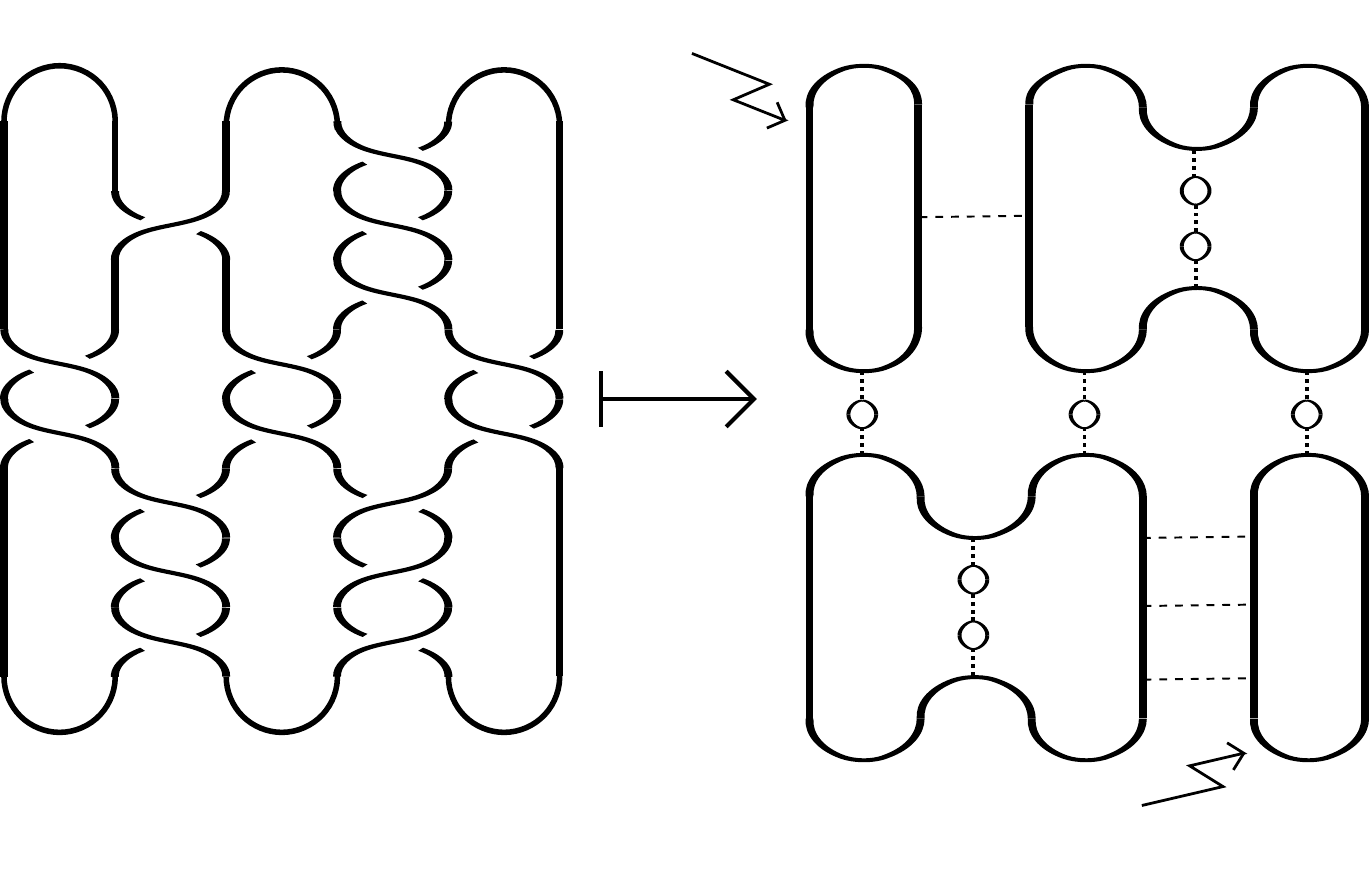
	\caption{An example of a mixed-sign plat diagram and its all-A state. Note that the diagram above is obtained from the strongly negative plat diagram of Fig.~\ref{posplat} by changing the first negative twist region with three crossings to a positive twist region with a single crossing and changing the last negative twist region with three crossings to a positive twist region with three crossings. These changes create a ``secret'' small inner circle and a special circle, respectively.}
	\label{mixedplat}
\end{figure}

\begin{remark}
\label{bridgeremark}
When $n=2$ we have that $D(K)$ represents a two-bridge link $K$. Using the fact that two-bridge links are alternating, let $D={D}_{alt}(K)$ denote a reduced alternating diagram of $K$. It will be shown later that the plats considered in this work are all hyperbolic. Therefore, by Theorem B.3 of \cite{Bridge}, we get the following volume bounds:
\begin{equation}
2v_{3}\cdot t(D)-2.7066 < \mathrm{vol}(S^3\backslash K) < 2v_{8}\cdot\left(t(D)-1\right).
\end{equation}
\noindent Note that the coefficients of $t(D)$ in the upper and lower bounds above differ by a multiplicative factor of $3.6100\ldots$.
Since we have the above (better) volume bounds when $n=2$, then will assume for the remainder of this paper (as we have done with the definitions of strongly negative and mixed-sign plat diagrams) that $n\geq3$. 
\end{remark}


\subsection{Volume Bounds for Strongly Negative Plats in Terms of $t(D)$}

\begin{theorem}\label{posthm}
Let $D(K)$ be a strongly negative plat diagram. Then $D(K)$ is a connected, prime, A-adequate diagram that satisfies the TELC, contains $t(D)\geq7$ twist regions, and contains $st(D)=0$ special tangles. Furthermore, $K$ is hyperbolic and the complement of $K$ satisfies the following volume bounds:
\begin{equation}
\frac{4v_{8}}{5}\cdot(t(D)-1)+\frac{v_{8}}{5} \leq \mathrm{vol}(S^3\backslash K) < 10v_{3}\cdot\left(t(D)-1\right).
\end{equation}
\end{theorem}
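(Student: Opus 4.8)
The plan is to reduce everything to the machinery already built: verify that a strongly negative plat diagram satisfies the hypotheses of Theorem~\ref{Cor}, and then extract a sharp lower bound on $-\chi(\mathbb{G}_A')$ via Lemma~\ref{chilemma} by counting the other circles ($OCs$) of its all-$A$ state. The starting point is the explicit structure of $H_A$ (cf.\ Fig.~\ref{posplat}). Since every twist region is negative with at least two crossings, every twist region resolves to a long resolution; hence, viewed from the outside, each twist region between strands $i$ and $i+1$ caps off the two strands entering from above and cups together the two strands leaving below, with a vertical chain of small inner circles ($SICs$) trapped in between. I would first record this local picture and then trace how the plat caps, the cups, and the caps link up globally.

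First I would establish the diagrammatic hypotheses. Connectedness is immediate from the brick-wall pattern together with $a_{i,j}\neq 0$ for all $i,j$. For A-adequacy, note that every A-segment lies on one of the long-resolution chains $OC$--$SIC$--$\cdots$--$SIC$--$OC$, whose consecutive circles are distinct, so $\mathbb{G}_A$ has no one-edge loop. For the TELC I would observe that there are no short twist regions at all, so it suffices to rule out any two-edge loop: each $SIC$ is internal to a single twist region and meets its neighbours in exactly one A-segment apiece, and an even-row region joins its top $OC$ to its bottom $OC$ only through intervening $SICs$, so no pair of circles shares two A-segments. This is exactly where the defining inequalities are used: an odd-row region has both of its caps on the \emph{same} $OC$, so it would create a genuine (non-short) two-edge loop if it had only two crossings, which is precisely why $a_{i,j}\le -3$ is imposed in odd rows, while even rows, which join two distinct $OCs$, may have $a_{i,j}\le -2$. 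Primeness I expect to be the main obstacle: I would argue that any simple closed curve meeting $D(K)$ in two points bounds a trivial tangle by examining its possible positions relative to the grid of twist regions, and this case analysis is the most delicate part. Finally, $t(D)=(k+1)(n-1)+kn=2kn-k+n-1\ge 7$ by direct count (minimum at $n=3$, $k=1$).

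Next I would count the $OCs$ and hence $st(D)$. Tracing the state circles shows that $H_A$ decomposes into $k+1$ large circles, one per odd row: the $j$-th such circle absorbs the whole odd row $2j-1$ (both its caps and its cups), the bottom cups of the even row just above it, and the top caps of the even row just below it, with the plat caps playing the role of the missing even row at the two ends. Each boundary $OC$ is then incident to $(n-1)+n=2n-1$ twist regions and has degree $3n-2$ in $\mathbb{G}_A'$, while each interior $OC$ is incident to $n+(n-1)+n=3n-1$ twist regions and has degree $4n-2$; in particular every $OC$ meets at least three twist regions, so Lemma~\ref{tanglemma} gives $st(D)=0$ (the diagram is plainly not the exceptional one of Fig.~\ref{twistknot}).

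To finish, I would apply Lemma~\ref{chilemma} with $\#\{OCs\}=k+1$ to get $-\chi(\mathbb{G}_A')=t(D)-(k+1)=2kn-2k+n-2$, and then verify the clean uniform estimate $-\chi(\mathbb{G}_A')\ge \tfrac{4t(D)-3}{5}$. Clearing denominators, this inequality is equivalent to $(2k+1)(n-3)\ge 0$, which holds because $n\ge 3$ (with equality exactly when $n=3$, so the bound is sharp there). Since the hypotheses of Theorem~\ref{Cor} are now verified, $K$ is hyperbolic and $-v_8\,\chi(\mathbb{G}_A')\le \mathrm{vol}(S^3\backslash K)<10v_3\,(t(D)-1)$; rewriting $\tfrac{4t(D)-3}{5}=\tfrac45(t(D)-1)+\tfrac15$ turns the lower bound into the stated $\tfrac{4v_8}{5}(t(D)-1)+\tfrac{v_8}{5}$. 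As an alternative to the exact circle count, the same finish follows from the degree identity $\sum_{OCs}\deg(v)=2t(D)$ of the Main Theorem proof once one knows that exactly two $OCs$ have degree $3n-2$ and the rest have degree $4n-2$, but the direct structural count seems cleaner.
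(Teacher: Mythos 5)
Your proposal is correct and follows essentially the same route as the paper: verify connectedness, primeness, A-adequacy, the TELC, $t(D)\geq 7$, and $st(D)=0$ directly from the structure of $H_A$, then count $\#\{OCs\}=k+1$, apply Lemma~\ref{chilemma}, and feed the resulting bound into Theorem~\ref{Cor}. Your final algebraic step (reducing the inequality to $(2k+1)(n-3)\geq 0$) is just a repackaging of the paper's estimate $t(D)\geq 5k+2$, i.e.\ $k\leq\frac{t(D)-2}{5}$, and your explicit degree counts for boundary and interior $OCs$ are consistent with, though more detailed than, the paper's observation that each $OC$ meets at least five twist regions.
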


For an example of a plat diagram that satisfies the assumptions above, see the left side of Fig.~\ref{posplat}. Having such a figure in mind will help when considering the proof of this result. 

\begin{proof} Since $a_{i,j}\neq 0$ for all $i$ and $j$, then $D(K)$ must be a connected link diagram. See Fig.~\ref{newplat} for visual support. 

Since we have that $k \geq 1$, that $n \geq 2$, and that $a_{i,j} \neq 0$ for all $i$ and $j$, then by careful and methodical inspection we get that $D(K)$ is prime. To see this, let $C$ denote a simple closed curve in the plane that intersects $D(K)$ twice transversely and let $p$ be an arbitrary base point for $C$. Considering the possible locations of $p$ in $S^2 \backslash D(K)$ (perhaps using Fig.~\ref{posplat} to assist in visualization), it can be seen that it is impossible for $C$ to both close up and contain crossings on both sides. Thus, $D(K)$ is indeed a prime link diagram.  


By inspecting $H_{A}$, we get that $D(K)$ is A-adequate. To see this, first notice that the vertical A-segments between (the necessarily distinct) $OCs$ can never contribute to non-A-adequacy. Second, since $a_{i,j} \leq -2$ in odd-numbered rows, then the vertical A-segments within a given $OC$ either connect distinct small inner circles or connect an $OC$ to a small inner circle. Therefore, since no A-segment connects a circle to itself, then $D(K)$ is A-adequate. 

The assumptions that $a_{i,j} \leq -3$ in odd-numbered rows and $a_{i,j}\leq-2$ in even-numbered rows guarantee that $D(K)$ satisfies the TELC. To be specific, having $a_{i,j}\leq-2$ in even-numbered rows forces there to always be at least one small inner circle to act as a buffer between adjacent $OCs$, making it is impossible for two given $OCs$ to share any (let alone two) A-segments. Furthermore, notice that a small inner circle from an even-numbered row must always connect to a pair of distinct circles. Next, having $a_{i,j} \leq -3$ in odd-numbered rows guarantees that there are at least two inner circles for each odd-rowed twist region, which prevents an $OC$ from connecting to an interior small inner circle and then back to itself along another A-segment. Finally, by construction, it is impossible for a pair of small inner circles to share more than one A-segment. Since we have just shown that no two all-A circles share more than one A-segment, then the TELC is trivially satisfied. 

Since $n \geq 3$, $k \geq 1$, and $a_{i,j}\neq0$ for all $i$ and $j$, then $t(D)\geq7\geq2$. Combining this with what was shown above and using Theorem~\ref{Cor}, we can conclude that $K$ is hyperbolic. Inspection also shows that $st(D)=0$ because each $OC$ is incident to at least five twist region resolutions. 

It remains to show that $K$ satisfies the desired volume bounds. Since there is one $OC$ of $H_{A}$ corresponding to each odd row of twist regions in $D(K)$, then we have that $\#\left\{\mathrm{OCs}\right\}=k+1$. Applying Lemma~\ref{chilemma} gives: 
\begin{eqnarray}
-\chi(\mathbb{G}_{A}') & = & t(D)-\#\left\{OCs\right\}\nonumber\\
\ & = & t(D)-k-1.
\end{eqnarray}
We would now like to eliminate the dependence of $-\chi(\mathbb{G}_{A}')$ on $k$. Expand $t(D)$ as:
\begin{eqnarray}
\label{treg}
	t(D) &=& \#(\mathrm{odd}\text{-}\mathrm{numbered\ rows})\cdot\#(\mathrm{twist\ regions\ per\ odd\ row}) \nonumber\\
	\ &\ & +\#(\mathrm{even}\text{-}\mathrm{numbered\ rows})\cdot\#(\mathrm{twist\ regions\ per\ even\ row}) \nonumber\\
  \ &=& (k+1)(n-1)+kn.
\end{eqnarray}
\noindent Since $n \geq 3$, then $t(D)=(k+1)(n-1)+kn=2kn-k+n-1\geq5k+2$, which implies that $\displaystyle k \leq \frac{t(D)-2}{5}$. Thus, we get the following:
\begin{eqnarray}
\label{chipos}
-\chi(\mathbb{G}_{A}') & = & t(D)-k-1\nonumber\\
\ & \geq & t(D)-\left(\frac{t(D)-2}{5}\right)-1\nonumber\\
\ & = & \frac{4}{5}\cdot (t(D)-1)+\frac{1}{5}.
\end{eqnarray}
\noindent By applying Theorem~\ref{Cor}, we get the desired volume bounds. \end{proof}

\begin{remark}
It can be shown that, since $t(D)\geq7$, then the lower bound given by Theorem~\ref{posthm} is always sharper than the lower bound provided by applying Corollary~\ref{mcor} to strongly negative plats. 
\end{remark}

%
%
%
 %
%

\subsection{Volume Bounds for Mixed-Sign Plats in Terms of $t(D)$}
\label{mixitup}
 
Starting from a strongly negative plat diagram, we are able to form a mixed-sign plat diagram by iteratively replacing any of the negative twist regions in the odd-numbered rows with positive twist regions (which need only contain at least one crossing). For an example of this process, see how Fig.~\ref{posplat} turns into Fig.~\ref{mixedplat}. Notice that changing an arbitrary negative twist region of an odd-numbered row to a positive twist region will break the relevant $OC$ into two all-A circles. This is because a long twist region is changed to a one-crossing or short twist region. In the relevant part of the new all-A state, all but one of the new horizontal A-segments correspond to redundant parallel edges of $\mathbb{G}_{A}$. Thus, this entire new positive twist region corresponds to a single edge of $\mathbb{G}_{A}'$. These remarks hold true during every iteration of the procedure mentioned above.  

\bigskip

\noindent \textbf{Notation:} Let $t^{+}(D)$ and $t^{-}(D)$ denote the number of positive and negative twist regions in $D(K)$, respectively. 

\begin{theorem} 
\label{mixthm}
Let $D(K)$ be a mixed-sign plat diagram. Then $D(K)$ is a connected, prime, A-adequate link diagram that satisfies the TELC, contains $t(D)\geq3$ twist regions, and contains $st(D)\leq4$ special tangles. Furthermore, $K$ is hyperbolic and the complement of $K$ satisfies the following volume bounds:
\begin{equation}
\displaystyle \frac{v_{8}}{3}\cdot\left(2t^{-}(D)-1\right)-\frac{2v_{8}}{3} \leq \mathrm{vol}(S^3 \backslash K) < 10v_{3}\cdot\left(t(D)-1\right).
\end{equation}
\noindent If we also have that $D(K)$ contains at least as many negative twist regions as it does positive twist regions, then: 
\begin{equation}
\displaystyle \frac{v_{8}}{3}\cdot\left(t(D)-1\right)-\frac{2v_{8}}{3} \leq \mathrm{vol}(S^3 \backslash K) < 10v_{3}\cdot\left(t(D)-1\right).
\end{equation}
\end{theorem}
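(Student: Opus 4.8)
The plan is to follow the template established for strongly negative plats (Theorem~\ref{posthm}): first verify the diagrammatic hypotheses so that Theorem~\ref{Cor} applies, then bound $-\chi(\mathbb{G}_{A}')$ from below using Lemma~\ref{chilemma}, and finally feed this bound into Theorem~\ref{Cor}. The novelty is that positive (hence short) twist regions are now permitted in the odd rows, so the counting of \emph{other circles} must be redone.

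First I would establish that $D(K)$ is connected, prime, A-adequate, and satisfies the TELC. Connectedness and primeness follow exactly as in the proof of Theorem~\ref{posthm}, since those arguments used only $a_{i,j} \neq 0$ and the plat structure, both unaffected by the sign changes. For A-adequacy and the TELC I would re-examine the all-A state: the hypothesis $a_{i,j} \leq -2$ in even rows still forces a buffering small inner circle between adjacent $OCs$, while $a_{i,j} \leq -3$ in negative odd-row twist regions still guarantees at least two small inner circles per such region. The genuinely new local pictures are the positive odd-row twist regions, which are short; I would check that each such short resolution joins two distinct circles and that no pair of circles shares two A-segments except those coming from a single short twist region. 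For the twist number, since sign changes do not alter the number of twist regions, Eq.~(\ref{treg}) still gives $t(D) = (k+1)(n-1) + kn \geq 7$, in particular $t(D) \geq 3 \geq 2$, so Theorem~\ref{Cor} yields hyperbolicity. For $st(D) \leq 4$, I would invoke Lemma~\ref{tanglemma}: a special circle is an $OC$ incident to A-segments from exactly two twist regions, and in the plat every interior $OC$ meets many twist regions; such a deficient $OC$ can only occur adjacent to one of the four semicircular arcs at the corners of the plat, giving at most one special circle per corner.

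The core of the argument is the bound on $-\chi(\mathbb{G}_{A}')$. The key claim is
\[
\#\{OCs\} \leq (k+1) + t^{+}(D).
\]
To prove it I would build the mixed-sign plat from its underlying strongly negative plat by converting the $t^{+}(D)$ positive twist regions one at a time, as in the discussion preceding the theorem. The strongly negative plat has exactly $k+1$ $OCs$ (one per odd row), and each conversion of a negative odd-row twist region to a positive one breaks the relevant $OC$ into two all-A circles, which can raise $\#\{OCs\}$ by at most one (the second circle may be a ``secret'' small inner circle rather than an $OC$). After all $t^{+}(D)$ conversions the claim follows. Combining the claim with Lemma~\ref{chilemma} gives
\begin{eqnarray}
-\chi(\mathbb{G}_{A}') &=& t(D) - \#\{OCs\} \nonumber \\
&\geq& \bigl(t^{+}(D)+t^{-}(D)\bigr) - (k+1) - t^{+}(D) \nonumber \\
&=& t^{-}(D) - k - 1. \nonumber
\end{eqnarray}
Since every even-row twist region is negative and there are $kn$ of them with $n \geq 3$, we have $t^{-}(D) \geq kn \geq 3k$, i.e. $k \leq t^{-}(D)/3$, so $-\chi(\mathbb{G}_{A}') \geq t^{-}(D) - t^{-}(D)/3 - 1 = \frac{2t^{-}(D)-3}{3} = \frac{1}{3}\bigl(2t^{-}(D)-1\bigr) - \frac{2}{3}$. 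Feeding this into Theorem~\ref{Cor} produces the first displayed bound. The second bound is then immediate: if $t^{-}(D) \geq t^{+}(D)$ then $t(D) = t^{+}(D) + t^{-}(D) \leq 2t^{-}(D)$, so $2t^{-}(D) - 1 \geq t(D) - 1$ and the lower bound only decreases upon substitution.

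The main obstacle I anticipate is rigorously justifying the key claim that each positive conversion increases $\#\{OCs\}$ by at most one, since this rests on a careful local analysis of the all-A state that distinguishes the ``secret'' small inner circles (created by one-crossing positive twist regions) and special circles (created by short positive twist regions) from genuine new $OCs$. A secondary subtlety is the corner analysis behind $st(D) \leq 4$, where one must rule out special circles arising in the interior of the plat; both hinge on the same structural understanding of how short resolutions interact with the even-row buffering circles.
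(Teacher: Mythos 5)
Your proposal follows the paper's proof essentially step for step: connectedness, primeness, A-adequacy, and the TELC are inherited from the strongly negative case with the same local checks on the new short resolutions; the key bound $\#\{OCs\} \leq (k+1) + t^{+}(D)$ is obtained by the same iterative-conversion argument (each sign change splits an $OC$ into at most one new $OC$ plus possibly a ``secret'' small inner circle); the corner analysis for $st(D)\leq 4$ is the paper's; and the arithmetic via Lemma~\ref{chilemma}, $t^{-}(D)\geq kn$, and $k \leq t^{-}(D)/3$ is identical.

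The one genuine flaw is your justification of $t(D)\geq 3$. You assert that ``sign changes do not alter the number of twist regions'' and hence that $t(D)=(k+1)(n-1)+kn\geq 7$ still holds. This is false for mixed-sign plats: a twist region is a maximal string of bigons in the projection graph, and converting a corner twist region to a single positive crossing ($a_{i,j}=1$) can merge it with an adjacent negative twist region, so Eq.~(\ref{treg}) degrades to the inequality $t(D)\leq (k+1)(n-1)+kn$ (this is exactly why the paper's later proposition about $m\geq 2n-1$ works with an inequality, and why Fig.~\ref{mixedplat} exhibits a ``secret'' small inner circle). The paper's count is $t(D)\geq 7-4=3$, since at most the four corner twist regions can be absorbed in this way. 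Your final conclusion $t(D)\geq 3$ is therefore still true, and your volume estimates are unaffected because they rely only on $t^{-}(D)\geq kn$ and the $OC$ count rather than on Eq.~(\ref{treg}); but the claim $t(D)\geq 7$ and the reasoning behind it must be replaced by the absorption argument.
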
 

\begin{proof} The proofs of the connectedness and primeness of $D(K)$ are the same as those found in the proof of Theorem~\ref{posthm} and the proof that $D(K)$ is A-adequate is very similar. The only new observation that is needed is that any horizontal A-segments coming from positive twist regions necessarily connect distinct all-A circles. The proof that $D(K)$ satisfies the TELC is also similar to that found in the proof of Theorem~\ref{posthm}, but two-edge loops may now exist. The new possibility that $a_{i,j}\geq1$ in odd-numbered rows will give rise to two-edge loops whenever $a_{i,j}\geq2$. These two-edge loops come from the same short twist region and are, therefore, allowed by the TELC. 

Inspection of $H_{A}$ shows that the mixed-sign plat diagrams contain at least $7-4=3$ twist regions. This is because having $a_{i, j}=1$ in any of the four corners of $D(K)$ means that the corresponding state circles of $H_{A}$ in those corners will be ``secret'' small inner circles rather than $OCs$ and, consequently, we may have to absorb at most four twist regions into existing negative (long) twist regions. See Fig.~\ref{mixedplat} for an example. 

Using what was shown above, we can apply Theorem~\ref{Cor} to conclude that $K$ is hyperbolic. Inspection of $H_{A}$ shows that special circles can actually occur in mixed-sign plat diagrams. However, special circles can only possibly occur at the four corners of the link diagram. This is because, by the assumption that $a_{i,j} \neq 0$ for all $i$ and $j$, all but at most the four corner $OCs$ must be incident to three or more twist region resolutions. See Fig.~\ref{mixedplat} for an example. Therefore, we have that $st(D)\leq4$. It remains to show that $K$ satisfies the desired volume bounds. 

Recall the observation that we may start with a strongly negative plat diagram and iteratively change any of the negative twist regions in the odd-numbered rows to positive twist regions. This creates either a new $OC$ or a new small inner circle. Thus, after changing any odd-rowed negative twist regions to positive twist regions, we have that $\#\left\{OCs\right\} \leq k+1+t^{+}(D)$. Applying Lemma~\ref{chilemma} gives: 
\begin{eqnarray}
-\chi(\mathbb{G}_{A}') & = & t(D)-\#\left\{OCs\right\}\nonumber\\
\ & \geq & t(D)-k-1-t^{+}(D)\nonumber\\
\ & = & t^{-}(D)-k-1.
\label{chik2}
\end{eqnarray}
\noindent Recall that, by construction, we can only have positive twist regions in odd-numbered rows. Thus, all of the even-numbered rows must still contain only negative twist regions. Said another way: 
\begin{eqnarray}
\label{treg2}
	t^{-}(D) & \geq & \#(\mathrm{even}\text{-}\mathrm{numbered\ rows})\cdot\#(\mathrm{twist\ regions\ per\ even\ row})\nonumber\\
	\ & = & k\cdot n.
\end{eqnarray}
\noindent Since $t^{-}(D) \geq kn$, then the assumption that $n \geq 3$ gives $k \leq \dfrac{t^{-}(D)}{n} \leq \dfrac{t^{-}(D)}{3}$. Therefore, we get:
\begin{eqnarray}
-\chi(\mathbb{G}_{A}') & \geq & t^{-}(D)-k-1\nonumber\\
\ & \geq & t^{-}(D)-\frac{t^{-}(D)}{3}-1\nonumber\\
\ & = & \frac{1}{3}\cdot (2t^{-}(D)-1)-\frac{2}{3}.
\end{eqnarray}
\noindent Now suppose that $D(K)$ contains at least as many negative twist regions as it does positive twist regions, so that we have $t^{-}(D) \geq t^{+}(D)$. This implies that:
\begin{eqnarray}
2t^{-}(D) & = & t^{-}(D)+t^{-}(D)\nonumber\\
\ & \geq & t^{-}(D) + t^{+}(D)\nonumber\\
\ & = & t(D),
\end{eqnarray} 
\noindent which then implies that:
\begin{eqnarray}\label{chimixed}
-\chi(\mathbb{G}_{A}') & \geq & \frac{1}{3}\cdot (2t^{-}(D)-1)-\frac{2}{3}\nonumber\\
\ & \geq & \frac{1}{3}\cdot (t(D)-1)-\frac{2}{3}.
\end{eqnarray}
\noindent By applying Theorem~\ref{Cor}, we have the desired volume bounds. \end{proof}

\begin{remark} Note that the lower bounds on volume in terms of $t^{-}(D)$ will be sharper than those in terms of $t(D)$ in the case that $D(K)$ is a mixed-sign plat with more negative twist regions than positive twist regions. Furthermore, as the disparity between the number of positive and negative twist regions increases, the lower bound on volume in terms of $t^{-}(D)$ will continue to improve over the bound in terms of $t(D)$. 
\end{remark}

To conclude our study of mixed-sign plats, we would like to find a sufficient condition to guarantee that such a plat contains at least as many negative twist regions as positive twist regions. 

\begin{proposition}
If a mixed-sign plat contains $m \geq 2n-1$ rows of twist regions, then this plat contains at least as many negative twist regions as positive twist regions. 
\end{proposition}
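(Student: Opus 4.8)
The plan is to reduce the claim to a single inequality between the number of twist regions available in the odd rows and the number guaranteed to be negative in the even rows, and then to verify that inequality using the hypothesis $m \geq 2n-1$.

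First I would recall that, by the definition of a mixed-sign plat diagram, every even-numbered row satisfies $a_{i,j} \leq -2$, so all even-row twist regions are negative, while the only twist regions permitted to be positive are those in the odd-numbered rows. Writing $m = 2k+1$, there are $k$ even-numbered rows (each containing $n$ twist regions) and $k+1$ odd-numbered rows (each containing $n-1$ twist regions). Consequently $t^{+}(D) \leq (k+1)(n-1)$, since every positive twist region lies in an odd row, and $t^{-}(D) \geq kn$, since each of the $kn$ even-row twist regions is negative.

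Next I would translate the hypothesis: the condition $m = 2k+1 \geq 2n-1$ is equivalent to $k \geq n-1$. The key step is then to show that $kn \geq (k+1)(n-1)$; expanding the right-hand side gives $(k+1)(n-1) = kn - k + n - 1$, so this inequality is equivalent to $0 \geq n - 1 - k$, that is, to $k \geq n-1$, which is exactly the translated hypothesis. Chaining the two crude bounds with this comparison yields $t^{-}(D) \geq kn \geq (k+1)(n-1) \geq t^{+}(D)$, which is the desired conclusion.

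I do not anticipate a genuine obstacle, since the argument is a direct counting comparison. The one point requiring a little care is confirming that the bounds $t^{+}(D) \leq (k+1)(n-1)$ and $t^{-}(D) \geq kn$ capture the extremal configuration, namely the case in which every odd-row twist region is chosen to be positive (making $t^{+}(D)$ as large as possible and $t^{-}(D)$ as small as possible). Once that is noted, the inequality $kn \geq (k+1)(n-1)$ settles this worst case and hence all cases.
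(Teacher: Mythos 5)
Your proof is correct and follows essentially the same route as the paper: both arguments rest on the bounds $t^{-}(D)\geq kn$ and $t^{+}(D)\leq (k+1)(n-1)$ together with the observation that $kn\geq (k+1)(n-1)$ is equivalent to $k\geq n-1$, i.e.\ to $m\geq 2n-1$. The only cosmetic difference is that you obtain $t^{+}(D)\leq (k+1)(n-1)$ directly by locating all positive twist regions in the odd rows, whereas the paper derives the same bound by subtracting $t^{-}(D)\geq kn$ from $t(D)\leq (k+1)(n-1)+kn$.
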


\begin{proof} Since the process to change a strongly negative plat into a mixed-sign plat may create a situation where seemingly different twist regions are actually part of a single twist region, then Eq.~(\ref{treg}) for strongly negative plats becomes the inequality $t(D)\leq(k+1)(n-1)+kn$ for mixed-sign plats. By Inequality~(\ref{treg2}), we also have that $t^{-}(D) \geq kn$. Combining this information, we get that: 
\begin{eqnarray}
t^{-}(D)+t^{+}(D) & = & t(D)\nonumber\\
\ & \leq & (k+1)(n-1)+kn\nonumber\\
\ & \leq & (k+1)(n-1)+t^{-}(D),
\end{eqnarray}
\noindent which implies that: 
\begin{eqnarray}
t^{+}(D) & \leq & (k+1)(n-1)\nonumber\\
\ & = & kn+n-k-1\nonumber\\
\ & \leq & t^{-}(D)+n-k-1.
\end{eqnarray}
\noindent Thus, to guarantee that $t^{-}(D) \geq t^{+}(D)$, we need that $n-k-1 \leq 0$. But this condition is equivalent to $k \geq n-1$ is equivalent to $m=2k+1 \geq 2n-1$. \end{proof}

\begin{remark}
It can be shown that, for mixed-sign plats that contain at least as many negative twist regions as positive twist regions, the lower bound found in Theorem~\ref{mixthm} is always slightly sharper than the lower bound provided by applying the Main Theorem (Theorem~\ref{mainthm}). \end{remark}

%
%
%
%
%

\section{Volume Bounds in Terms of the Colored Jones Polynomial} 

\begin{theorem}[\cite{HeadTail}, \cite{Stoimenow}]
Denote the $n^{th}$ colored Jones polynomial of a link $K$ by:
\begin{equation}
J_{K}^{n}(t)=\alpha_{n}t^{m_{n}}+\beta_{n}t^{m_{n}-1}+\cdots+\beta_{n}'t^{r_{n}+1}+\alpha_{n}'t^{r_{n}}.
\end{equation} 
Let $D(K)$ be a connected A-adequate link diagram. Then $\left|\beta_{n}'\right|$ is independent of $n$ for $n>1$. Specifically, for $n>1$, we have that: 
\begin{equation}\label{stable}
\left|\beta_{K}'\right|:=\left|\beta_{n}'\right|=1-\chi(\mathbb{G}_{A}').
\end{equation}
\end{theorem}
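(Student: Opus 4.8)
The plan is to compute the two lowest-order coefficients of the colored Jones polynomial directly from the Kauffman bracket state sum and to match them against the combinatorics of the all-A state. First I would recall that $J_K^n(t)$ is obtained, up to a framing correction and normalization, from the Kauffman bracket of a cable of $D(K)$ carrying a Jones–Wenzl idempotent on each strand, under the substitution $t = A^{-4}$ (the direction of this substitution is irrelevant here since we only care about absolute values). Expanding each idempotent as the identity plus lower terms, and each crossing by the state-sum rule $\langle D\rangle = \sum_s A^{a(s)-b(s)}(-A^2 - A^{-2})^{|s|-1}$ with $a(s),b(s)$ counting A- and B-resolutions and $|s|$ the number of loops, reduces the problem to understanding which states realize the two extreme powers of $A$, equivalently the two lowest powers of $t$.

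First I would establish the leading coefficient. Among all states of the cabled diagram, the all-A state $H_A$ uniquely maximizes the exponent $a(s)-b(s)$, and A-adequacy guarantees there is no competing state of equal top degree that could cancel it; this forces $|\alpha_n'| = 1$. This step is the standard adequacy argument, and it is exactly where the hypothesis that $D(K)$ is A-adequate enters.

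The heart of the argument is the second coefficient. The next power of $A$ is achieved by states that differ from the all-A state by switching a single resolution from A to B, together with the sub-leading contribution of the $(-A^2-A^{-2})$ factors attached to the all-A loops. Switching the resolution at a single A-segment merges the two incident all-A circles (distinct, by A-adequacy), and each such switch contributes a term $\pm A^{(\mathrm{shift})}$ to $\beta_n'$. The key cancellation is that a family of parallel A-segments joining the same pair of circles — precisely a multi-edge of $\mathbb{G}_A$ — contributes with alternating signs so that only one representative survives; this is exactly why the reduced graph $\mathbb{G}_A'$, rather than $\mathbb{G}_A$, controls $\beta_n'$. Carefully collecting the surviving edge contributions together with the loop-factor terms yields $|\beta_n'| = e(\mathbb{G}_A') - v(\mathbb{G}_A') + 1 = 1 - \chi(\mathbb{G}_A')$, where connectedness of $D(K)$ is used to identify this quantity with the first Betti number of $\mathbb{G}_A'$.

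The main obstacle I anticipate is proving that this count is genuinely independent of $n$, i.e. the stabilization claim. The subtlety is controlling the contributions of the Jones–Wenzl idempotents to the two extreme degrees of the cabled bracket: one must show that, at the top two powers of $A$, the idempotents contribute only through their identity (leading) terms, so that the all-A graph of the cable is a predictable ``inflation'' of $\mathbb{G}_A$ whose reduction has the same first Betti number for every $n>1$. Making this precise — and ruling out unexpected cancellation among genuinely $n$-dependent states at the second-extreme degree — is the technical core, and it is precisely the part carried out in detail in the cited works \cite{HeadTail} and \cite{Stoimenow}, which I would invoke to conclude.
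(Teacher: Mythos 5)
The paper does not prove this theorem at all: it is imported verbatim from the cited references \cite{HeadTail} and \cite{Stoimenow}, so there is no internal proof to compare against. Your sketch is a faithful outline of how those references actually argue (adequacy forcing $|\alpha_n'|=1$, the penultimate coefficient counted by states one B-switch away from $H_A$, cancellation within multi-edges reducing the count to $1-\chi(\mathbb{G}_A')$ as the first Betti number of the reduced graph), and you correctly identify the $n$-stabilization through the Jones--Wenzl idempotents as the technical core that must be delegated to the cited works -- which is exactly what the paper itself does.
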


\begin{remark}
By combining the above result with Theorem~\ref{Cor}, we get that: 
\begin{equation}
v_{8} \cdot \left(\left|\beta_{K}'\right|-1\right)\leq \mathrm{vol}(S^{3}\backslash K)
\end{equation}
\noindent for the links considered in this paper. 
\end{remark}

Furthermore, by applying Eq.~(\ref{stable}) and Inequalities~(\ref{char}),~(\ref{chicorbound}),~(\ref{chipos}), and~(\ref{chimixed}), respectively, to Theorem~\ref{Cor}, we get the following respective results: 

\begin{proposition} Let $D(K)$ be a connected, prime, A-adequate link diagram that satisfies the TELC and contains $t(D)\geq2$ twist regions. Then $K$ is hyperbolic and:
\begin{equation}
\mathrm{vol}(S^{3}\backslash K) < 30v_{3}\cdot \left(\left|\beta_{K}'\right|-1\right) +10v_{3} \cdot \left(st(D)-1\right).
\end{equation}
\end{proposition}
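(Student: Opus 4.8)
The plan is to invert the relationship between the volume lower bound from the Main Theorem (Theorem~\ref{mainthm}) and the stable coefficient $\beta_K'$, expressing the upper bound on volume in terms of $|\beta_K'|$ and $st(D)$ rather than $t(D)$. The key observation is that the upper bound $10v_3 \cdot (t(D)-1)$ from Theorem~\ref{Cor} involves $t(D)$, whereas $|\beta_K'|$ is governed by $-\chi(\mathbb{G}_A')$ via Eq.~(\ref{stable}). So the whole task reduces to bounding $t(D)$ from above in terms of $-\chi(\mathbb{G}_A')$ and $st(D)$.

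First I would recall from Inequality~(\ref{char}), which was established in the proof of the Main Theorem, that
\begin{equation}
-\chi(\mathbb{G}_A') \geq \frac{1}{3}\cdot\left[t(D)-st(D)\right].
\end{equation}
Rearranging this inequality gives an upper bound on $t(D)$, namely $t(D) \leq 3\cdot\left(-\chi(\mathbb{G}_A')\right) + st(D)$. The next step is to substitute the stable-coefficient identity $|\beta_K'| = 1 - \chi(\mathbb{G}_A')$ from Eq.~(\ref{stable}), which rearranges to $-\chi(\mathbb{G}_A') = |\beta_K'| - 1$. Combining these yields $t(D) \leq 3\cdot\left(|\beta_K'|-1\right) + st(D)$, and hence
\begin{equation}
t(D) - 1 \leq 3\cdot\left(|\beta_K'|-1\right) + st(D) - 1 = 3\cdot\left(|\beta_K'|-1\right) + \left(st(D)-1\right).
\end{equation}

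Finally I would feed this bound into the upper volume estimate $\mathrm{vol}(S^3\backslash K) < 10v_3\cdot(t(D)-1)$ from Theorem~\ref{Cor}. Multiplying the displayed inequality through by $10v_3$ gives exactly
\begin{equation}
\mathrm{vol}(S^3\backslash K) < 30v_3\cdot\left(|\beta_K'|-1\right) + 10v_3\cdot\left(st(D)-1\right),
\end{equation}
as claimed. The hyperbolicity of $K$ is inherited directly from the hypotheses via Theorem~\ref{Cor}, since the link diagram $D(K)$ satisfies exactly the standing assumptions there.

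The proof is essentially a bookkeeping chain, so I do not anticipate a genuine mathematical obstacle; the only point requiring care is verifying that Inequality~(\ref{char}) holds under precisely the hypotheses stated here (connected, prime, A-adequate, TELC, $t(D)\geq 2$), so that it can be legitimately reused. Since these are verbatim the hypotheses of the Main Theorem, and Inequality~(\ref{char}) was derived from Lemma~\ref{chilemma} and Lemma~\ref{tanglemma} under exactly those hypotheses, the reuse is justified. I would also note the cosmetic step of writing $st(D)-1$ rather than $st(D)$ so that the final expression matches the clean form of the remark preceding the proposition, where the pure $|\beta_K'|$ bound corresponds to the $st(D)=1$ (or smaller) regime.
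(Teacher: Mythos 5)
Your proposal is correct and matches the paper's own (implicit) argument exactly: the paper derives this proposition precisely by rearranging Inequality~(\ref{char}) to bound $t(D)-1$ above by $3\left(\left|\beta_{K}'\right|-1\right)+st(D)-1$ via Eq.~(\ref{stable}), and substituting into the upper bound of Theorem~\ref{Cor}. No gaps; the strict inequality and the hyperbolicity claim carry over from Theorem~\ref{Cor} as you note.
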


\begin{proposition}
Let $D(K)$ satisfy the hypotheses of Theorem~\ref{mainthm}. Furthermore, assume that each $OC$ of $H_{A}$ has at least $m\geq3$ incident twist region resolutions. Then $K$ is hyperbolic and:
\begin{equation}
\mathrm{vol}(S^3\backslash K) < \dfrac{m}{m-2}\cdot 10v_{3}\cdot \left(\left|\beta_{K}'\right|-1\right)-10v_{3}.
\end{equation}
\end{proposition}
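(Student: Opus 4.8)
The plan is to combine three facts that are already in place: the hyperbolicity and diagrammatic structure supplied by Corollary~\ref{mcor}, the $t(D)$-upper bound on volume from Theorem~\ref{Cor}, and the identification of $-\chi(\mathbb{G}_{A}')$ with a Jones coefficient via Eq.~(\ref{stable}). The one genuinely new idea is to read Inequality~(\ref{chicorbound}) ``backwards'': whereas in Corollary~\ref{mcor} it is used to bound $-\chi(\mathbb{G}_{A}')$ from below, here I would use it to bound the twist number $t(D)$ from above, since it is exactly this $t(D)$ that drives the upper volume bound of Theorem~\ref{Cor}.

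Concretely, since $D(K)$ satisfies the hypotheses of Theorem~\ref{mainthm} and each $OC$ has at least $m \geq 3$ incident twist region resolutions, Corollary~\ref{mcor} already gives that $K$ is hyperbolic (and that $st(D)=0$), so the hyperbolicity claim requires no further work. Next I would rearrange Inequality~(\ref{chicorbound}), namely $-\chi(\mathbb{G}_{A}') \geq \frac{m-2}{m}\cdot t(D)$, into the equivalent upper bound $t(D) \leq \frac{m}{m-2}\cdot\left(-\chi(\mathbb{G}_{A}')\right)$; this is where the hypothesis $m \geq 3$ is needed, so that $m-2 > 0$ and the direction of the inequality is preserved. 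Substituting the stable-coefficient identity $-\chi(\mathbb{G}_{A}') = \left|\beta_{K}'\right| - 1$ from Eq.~(\ref{stable}) then yields $t(D) \leq \frac{m}{m-2}\cdot\left(\left|\beta_{K}'\right| - 1\right)$.

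Finally I would feed this into the upper bound $\mathrm{vol}(S^{3}\backslash K) < 10v_{3}\cdot(t(D)-1)$ of Theorem~\ref{Cor}. Because the right-hand side is increasing in $t(D)$, the bound on $t(D)$ transfers to give $\mathrm{vol}(S^{3}\backslash K) < 10v_{3}\cdot\left(\frac{m}{m-2}\left(\left|\beta_{K}'\right|-1\right) - 1\right)$, and distributing the $10v_{3}$ produces exactly the claimed expression $\frac{m}{m-2}\cdot 10v_{3}\cdot\left(\left|\beta_{K}'\right|-1\right) - 10v_{3}$. The only point that needs care --- and the nearest thing to an obstacle --- is bookkeeping of the inequality directions: one must verify that the strict inequality from Theorem~\ref{Cor} combines with the non-strict bound on $t(D)$ to yield an overall strict inequality, and that the monotonicity of $10v_{3}\cdot(t(D)-1)$ in $t(D)$ is being used legitimately. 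No new geometric or combinatorial input beyond the cited results is required.
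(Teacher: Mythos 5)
Your proposal is correct and follows exactly the route the paper intends: the paper obtains this proposition by applying Eq.~(\ref{stable}) and Inequality~(\ref{chicorbound}) to the upper bound of Theorem~\ref{Cor}, which is precisely your rearrangement of $-\chi(\mathbb{G}_{A}')\geq\frac{m-2}{m}\cdot t(D)$ into an upper bound on $t(D)$ followed by substitution of $\left|\beta_{K}'\right|-1$ for $-\chi(\mathbb{G}_{A}')$. Your care about the inequality directions and the monotonicity of $10v_{3}\cdot(t(D)-1)$ is sound, and no further input is needed.
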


\begin{proposition} 
Let $D(K)$ be a strongly negative plat diagram. Then the link $K$ is hyperbolic and: 
\begin{equation}
\mathrm{vol}(S^3\backslash K)  <  \frac{25v_{3}}{2}\cdot (\left|\beta_{K}'\right|-1)-\frac{5v_{3}}{2}.
\end{equation}
\end{proposition}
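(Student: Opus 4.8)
The plan is to combine the stable-coefficient identity of Eq.~(\ref{stable}) with the characteristic bound already established for strongly negative plats in the course of proving Theorem~\ref{posthm}. Since $D(K)$ is a strongly negative plat diagram, Theorem~\ref{posthm} already guarantees that $K$ is hyperbolic and that $D(K)$ is a connected, prime, A-adequate diagram satisfying the TELC with $t(D)\geq 7$, so no further geometric input is required here. The only genuinely new ingredient is the change of variables from $t(D)$ to $\left|\beta_{K}'\right|$.

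First I would recall Inequality~(\ref{chipos}), which states that $-\chi(\mathbb{G}_{A}')\geq \frac{4}{5}\cdot(t(D)-1)+\frac{1}{5}$ for strongly negative plats. Using Eq.~(\ref{stable}) to substitute $-\chi(\mathbb{G}_{A}')=\left|\beta_{K}'\right|-1$, this becomes a lower bound on $\left|\beta_{K}'\right|-1$ in terms of $t(D)-1$. The crucial step is then to \emph{invert} this relation: solving the inequality for $t(D)-1$ yields an \emph{upper} bound $t(D)-1\leq \frac{5}{4}\cdot\left|\beta_{K}'\right|-\frac{3}{2}$. This direction is exactly what is needed, because the volume upper bound supplied by Theorem~\ref{Cor} is increasing in $t(D)-1$.

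Finally I would substitute this upper bound on $t(D)-1$ into the right-hand inequality of Theorem~\ref{Cor}, namely $\mathrm{vol}(S^{3}\backslash K)<10v_{3}\cdot(t(D)-1)$, to obtain
\[
\mathrm{vol}(S^{3}\backslash K)<10v_{3}\left(\frac{5}{4}\cdot\left|\beta_{K}'\right|-\frac{3}{2}\right)=\frac{25v_{3}}{2}\cdot\left(\left|\beta_{K}'\right|-1\right)-\frac{5v_{3}}{2},
\]
after collecting the constant terms. There is no real obstacle in this argument; it is a routine algebraic manipulation of previously established bounds, and the only point requiring mild care is keeping the inequality direction correct when passing from a lower bound on $-\chi(\mathbb{G}_{A}')$ to an upper bound on $t(D)-1$, together with verifying that the additive constants recombine into the stated form $-\tfrac{5v_{3}}{2}$.
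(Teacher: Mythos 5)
Your proposal is correct and follows exactly the route the paper intends: combine Eq.~(\ref{stable}) with Inequality~(\ref{chipos}) to get $t(D)-1\leq \frac{5}{4}\left|\beta_{K}'\right|-\frac{3}{2}$ and substitute into the upper bound of Theorem~\ref{Cor}; the arithmetic checks out and reproduces the stated constant $-\frac{5v_{3}}{2}$.
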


\begin{proposition} 
Let $D(K)$ be a mixed-sign plat diagram that contains at least as many negative twist regions as it does positive twist regions. Then the link $K$ is hyperbolic and: 
\begin{equation}
\mathrm{vol}(S^3\backslash K) < 30v_{3}\cdot (\left|\beta_{K}'\right|-1)+20v_{3}.
\end{equation}
\end{proposition}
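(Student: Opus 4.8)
The plan is to translate the volume lower bound already established for these plats into a constraint on $t(D)$ via the stable coefficient, and then feed that constraint into the universal upper volume bound of Theorem~\ref{Cor}. Hyperbolicity requires nothing new: the hypotheses here are exactly those of Theorem~\ref{mixthm} (a mixed-sign plat with at least as many negative as positive twist regions), which already asserts that $K$ is hyperbolic.

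First I would recall Eq.~(\ref{stable}), namely $|\beta_K'| = 1 - \chi(\mathbb{G}_A')$, which I rewrite as $-\chi(\mathbb{G}_A') = |\beta_K'| - 1$. Because $D(K)$ is a mixed-sign plat with $t^{-}(D) \geq t^{+}(D)$, the chain of inequalities culminating in Inequality~(\ref{chimixed}) gives $-\chi(\mathbb{G}_A') \geq \tfrac{1}{3}\,(t(D)-1) - \tfrac{2}{3}$. Substituting the stable-coefficient expression for $-\chi(\mathbb{G}_A')$ then yields $|\beta_K'| - 1 \geq \tfrac{1}{3}\,(t(D)-1) - \tfrac{2}{3}$.

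Next I would solve this relation for $t(D) - 1$. The key point is that the inequality is a \emph{lower} bound on $-\chi(\mathbb{G}_A')$, hence, after rearranging, an \emph{upper} bound on $t(D) - 1$, namely $t(D) - 1 \leq 3\,(|\beta_K'| - 1) + 2$. This is the step to watch, since the direction of the inequality must be tracked carefully through the rearrangement. Finally, substituting this upper bound into the strict upper volume bound $\mathrm{vol}(S^3\backslash K) < 10v_3\,(t(D)-1)$ from Theorem~\ref{Cor}, and then distributing, gives $\mathrm{vol}(S^3\backslash K) < 30v_3\,(|\beta_K'| - 1) + 20v_3$, as claimed.

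The main obstacle is bookkeeping rather than anything conceptual: one must confirm that replacing $t(D)-1$ by its upper bound preserves the strict inequality for the volume, which it does because $10v_3\,(t(D)-1)$ is monotone increasing in $t(D)-1$ and the replacement only enlarges the right-hand side, and one must check that the constants combine correctly. No new diagrammatic or geometric input is needed beyond Theorem~\ref{mixthm} and the stable-coefficient identity~(\ref{stable}).
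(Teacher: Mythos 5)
Your proposal is correct and follows exactly the paper's route: it combines the stable-coefficient identity $|\beta_K'|-1=-\chi(\mathbb{G}_A')$ with Inequality~(\ref{chimixed}) to get the upper bound $t(D)-1\leq 3(|\beta_K'|-1)+2$, and substitutes into the upper bound of Theorem~\ref{Cor}, with hyperbolicity inherited from Theorem~\ref{mixthm}. The paper states this only as a one-line application of those same ingredients, so your write-up is just a more explicit version of the same argument.
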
 

\begin{remark}
The results in this section show that the links of the Main Theorem (including the strongly negative and mixed-sign plats) and the links of Corollary~\ref{mcor} satisfy a Coarse Volume Conjecture (\cite{Guts}, Section 10.4).  
\end{remark}

\section*{Acknowledgments}
I would like to thank Efstratia Kalfagianni for suggesting this project. I would also like to thank Faramarz Vafaee and Jessica Purcell for their helpful comments on earlier versions of this paper. Research was supported in part by RTG grant DMS-0739208 and NSF grant DMS-1105843.

\bibliography{mybib}
\bibliographystyle{plain}

\end{document}